\numberwithin{equation}{section}
\def \D {\mathbb{D}}
\def \C {\mathbb{C}}
\def \N {\mathbb{N}}
\def \R {\mathbb{R}}
\def \RR {\mathcal{R}}
\def \vol {{\rm vol}}
\newtheorem{theorem}{Theorem}[section]
\newtheorem{proposition}[theorem]{Proposition}
\newtheorem{corollary}[theorem]{Corollary}
\newtheorem{lemma}[theorem]{Lemma}
\theoremstyle{remark}
\begin{document}

\title[Non-central sections of the $l_1$-ball]{Non-central sections of the $l_1$-ball}

\author{Hermann K\"onig}
\address[Hermann K\"onig]{Mathematisches Seminar\\
Universit\"at Kiel\\
24098 Kiel, Germany
}
\email{hkoenig@math.uni-kiel.de}

\keywords{Volume, non-central section, $l_1$-ball}
\subjclass[2000]{Primary: 52A38, 52A40. Secondary: 52A20}

\begin{abstract}
We determine the maximal non-central hyperplane sections of the $l_1^n$-ball if the fixed distance of the hyperplane to the origin is between $\frac 1 {\sqrt 3}$ and $\frac 1 {\sqrt 2}$. This adds to a result of Liu and Tkocz who considered the distance range between $\frac 1 {\sqrt 2}$ and $1$. For $n \ge 4$, the maximal sections are parallel to the $(n-1)$-dimensional coordinate planes. We also study non-central sections of the complex $l_1^2$- and $l_\infty^2$-balls, where the formulas are more complicated than in the real case. Also, the extrema are partially different than in the real case.
\end{abstract}

\maketitle

\section{Introduction}

The study of sections of convex bodies is a very active area in convex geometry and geometric tomography. In an important paper, K. Ball \cite {B} found the maximal central hyperplane sections of the $n$-cube. The minimal ones had been determined by Hadwiger \cite{Ha} and Hensley \cite {He}. Ball's result was extended recently by Eskenazis, Nayar and Tkocz \cite {ENT} to $l_p^n$-balls for very large $p>2$. Meyer and Pajor \cite {MP} solved the maximal section problem for the $l_p^n$-balls, if $0 < p <2$, and the minimal one for the $l_1^n$-ball, which was extended by Koldobsky \cite {K} to $l_p^n$-balls for $0 < p < 2$. \\

For non-central sections at a fixed distance $t$ to the origin, not too many results are known. Moody, Stone, Zach and Zvavitch \cite {MSZZ} proved that the maximal sections of the $l_\infty^n$-unit ball at very large distances from the origin $\sqrt{n-1} < t < \sqrt n$ are those perpendicular to the main diagonal. Pournin \cite {P} extended their result to slightly smaller distances $\sqrt{n-2} < t \le \sqrt {n-1}$ if $n \ge 5$. Liu and Tkocz \cite {LT} proved a corresponding result for the $l_1^n$-ball, if the distance $t$ to the origin is between $\frac 1 {\sqrt 2}$ and $1$. Our main result extends their theorem to the range $t \in [\frac 1 {\sqrt 3} , \frac 1 {\sqrt 2}]$ for $n \ge 4$. For $n=3$, the same result holds only if $t \ge 0.6384$. The difficulty finding the maximal non-central sections at this distance range results from the fact that the formula for non-central $l_1^n$-sections at distance $t$ involves alternating terms with removable singularities. For $n = 2, 3$, the extremal sections are known for all $0 \le t \le 1$, cf. Liu, Tkocz \cite {LT} and Brzezinski \cite {Br}. Non-central sections of the regular $n$-simplex are studied by K\"onig in \cite{Ko} and \cite{Ko2}. \\

For $n=2$, we also investigate non-central complex hyperplane sections of the balls $B_1^2(\C)$ and $B_\infty^2(\C)$. The volume formulas are more complicated than in the real case. Obviously, $B_1^2(\C)$ and $B_\infty^2(\C)$ are no longer isometric as they are in the real case. \\

A paper by Nayar and Tkocz \cite {NT} gives an excellent survey on extremal sections of classical convex bodies. \\

For $ n \in \N$, let $B_1^n := \{ x \in \R \ | \ ||x||_1 \le 1 \}$ be the $l_1^n$-ball and consider the standard unit vectors $a^{(k)} := \frac 1 {\sqrt k} (\underbrace{1, \cdots , 1}_k, 0, \cdots ,0) \in S^{n-1} \subset \R^n$ for $1 \le k \le n$. For $a \in S^{n-1} \subset \R^n$ and $t \ge 0$, let
$$A_1(a,t) := \vol_{n-1} ( \{ x \in B_1^n \ | \ <x,a> = t \} ) = \vol_{n-1} ( B_1^n \cap \{ t a + a^\perp \} ) $$
be the {\it parallel section function} of the $l_1^n$-ball, with $A_1(a,t) = 0$ for $t > 1$. For $n \ge 3$ and $\frac 1 {\sqrt 2} \le t < 1$, the maximal sections at distance $t$ from the origin are parallel to a coordinate plane, as shown by Liu, Tkocz \cite {LT}, i.e. $A_1(a,t) \le A_1(a^{(1)},t)$ for all $a \in S^{n-1}$. We extend their result to $\frac 1 {\sqrt 3} \le t \le \frac 1 {\sqrt 2}$ for $n \ge 4$. Our main result is

\begin{theorem}\label{th1}
Let $n \ge 4$ and $\frac 1 {\sqrt 3} \le t \le 1$. Then for all $a \in S^{n-1}$
$$A_1(a,t) \le A_1(a^{(1)},t) = \frac {2^{n-1}}{(n-1)!} \ (1-t)^{n-1} \ . $$
The maximal hyperplane sections are parallel to coordinate planes. \\
For $n=3$, this only holds for $t \in [\sqrt 2 -1 - \sqrt{5-\frac 7 {\sqrt 2}},1] \simeq [0.6384,1]$.
\end{theorem}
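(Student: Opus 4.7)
The plan is to extend Liu--Tkocz's analysis \cite{LT} from their range $t \in [1/\sqrt 2, 1]$ down to $t \in [1/\sqrt 3, 1/\sqrt 2]$. By the sign-change and coordinate-permutation symmetries of $B_1^n$ I may assume $a = (a_1, \ldots, a_n) \in S^{n-1}$ satisfies $a_1 \geq a_2 \geq \cdots \geq a_n \geq 0$, and $a_1 \geq t$ (else the section is empty). A decisive observation for this $t$-range is that the number $k := \#\{j : a_j \geq t\}$ satisfies $k t^2 \leq \sum_j a_j^2 = 1$, hence $k \leq 2$ whenever $t > 1/\sqrt 3$; at the boundary $t = 1/\sqrt 3$ the only way to attain $k = 3$ is $a_1 = a_2 = a_3 = 1/\sqrt 3$ with $a_j = 0$ for $j \geq 4$, and for $n \geq 4$ this vector yields a section of $(n-1)$-measure zero, handled immediately. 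Thus the substantial analysis has only two cases, $k = 1$ (only $a_1 \geq t$) and $k = 2$ ($a_1 \geq a_2 \geq t$); this is the structural reason the argument can be pushed down from $1/\sqrt 2$ to $1/\sqrt 3$.

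Next I would write $A_1(a, t)$ via Polya's simplex-slicing formula applied to the orthant decomposition $B_1^n = \bigcup_{\epsilon \in \{\pm 1\}^n} K_\epsilon$, where $K_\epsilon$ is the simplex with vertices $0, \epsilon_1 e_1, \ldots, \epsilon_n e_n$ on which $\langle a, \cdot \rangle$ takes values $0, \epsilon_1 a_1, \ldots, \epsilon_n a_n$. Summing the per-orthant contributions yields an alternating expression
$$A_1(a, t) = \frac{1}{(n-1)!\,\prod_j a_j}\sum_{\epsilon} c_\epsilon \,(\max_j \epsilon_j a_j - t)_+^{n-1},$$
with coefficients $c_\epsilon$ built from sign patterns and vertex-value differences. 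Only orthants containing a vertex with value $\epsilon_j a_j \geq t$ contribute, so Case $k=1$ reduces essentially to a single term, while Case $k=2$ produces a small handful of terms exhibiting the alternating-cancellation phenomenon mentioned in the introduction.

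For Case $k = 1$, I expect a direct argument along the lines of Liu--Tkocz: a concentration/rearrangement step moving mass in $(a_2, \ldots, a_n)$ onto a single coordinate, followed by a one-variable optimization in $a_1$ along the sphere, forces the maximum to $a = e_1$. For Case $k = 2$, the constraints $a_1, a_2 \geq t$ and $a_1^2 + a_2^2 \leq 1$ confine $(a_1, a_2)$ to a small circular segment and force $\sum_{j \geq 3} a_j^2 \leq 1 - 2t^2 \leq 1/3$, so the tail is small; I would concentrate the tail again and reduce to an effective two-variable inequality in $(a_1, a_2)$, verified by calculus on the segment.

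The main obstacle is handling the removable singularities in the alternating formula: denominator factors $a_i - a_j$ vanish whenever coordinates coincide. My intended remedy is either to rewrite the formula in terms of divided differences, which are smooth in the nodes and absorb the singularities by construction, or to prove the inequality on the open dense set where all $a_j$ are distinct and positive and then extend by continuity. Finally, for $n = 3$ the vector $a^{(3)} = (1,1,1)/\sqrt 3$ is admissible for $t \geq 1/\sqrt 3$ and competes with $a^{(1)}$; I would compute $A_1(a^{(3)}, t)$ and $A_1(a^{(1)}, t)$ explicitly in dimension $3$ and solve the crossover equation $A_1(a^{(3)}, t) = A_1(a^{(1)}, t)$ for the threshold $t^\ast \simeq 0.6384$ stated in the theorem.
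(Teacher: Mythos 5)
Your high-level plan matches the paper's strategy in its essentials: the observation that $k=\#\{j:a_j\ge t\}\le 2$ once $t>1/\sqrt 3$ (with the $k=3$ boundary case at $t=1/\sqrt 3$ giving a measure-zero section for $n\ge 4$), the resulting split into the cases $a_1>t\ge a_2$ and $a_1>a_2>t$, a concentration step on the tail coordinates $(a_3,\ldots,a_n)$, and a separate explicit treatment of $n=3$. The route you suggest to the section formula (orthant decomposition into $2^n$ simplices and a Polya-type vertex formula) is a legitimate alternative to the paper's Fourier-transform derivation, though note that the display you wrote with $\bigl(\max_j \epsilon_j a_j-t\bigr)_+^{n-1}$ is not the correct form: the actual formula (Proposition~\ref{prop1}) is a sum over coordinates $j$ of $a_j^{n-2}(a_j-t)_+^{n-1}$ divided by the Vandermonde-type product $\prod_{k\ne j}(a_j^2-a_k^2)$, and the divided-difference reformulation you propose would indeed reproduce it.

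However, as submitted this is an outline rather than a proof; the phrases ``I expect,'' ``I would concentrate,'' and ``my intended remedy is'' flag exactly the steps that carry the actual difficulty, and none of them is carried out. Concretely, the following are missing and are not routine. (a) In the $k=2$ case, showing that concentrating the tail $(a_3,\ldots,a_n)$ onto a single coordinate can only increase $A_1$ requires analyzing the critical points of a Lagrange function subject to $\sum_{i\ge3}a_i^2 = 1-a_1^2-a_2^2$, which produces a quadratic with up to two distinct tail values; one then needs a monotonicity-in-multiplicity argument (Proposition~\ref{prop3.1}) to collapse to a single value of multiplicity one, and the proof of that monotonicity is itself several elementary but delicate estimates. (b) After reduction to the three active variables $(a_1,a_2,\sqrt{1-a_1^2-a_2^2})$, one needs a second nontrivial monotonicity in $a_2$ (Proposition~\ref{prop3.2}) to remove the tail completely, and then a two-branch analysis depending on whether $a_1\le 1/\sqrt 2$ (comparison against $a^{(2)}$, obtained by an l'H\^opital limit $a_2\to a_1$) or $a_1>1/\sqrt 2$ (comparison against $a^{(1)}$ with $a_2=\sqrt{1-a_1^2}$, which itself splits into $n=4$ and $n\ge 5$ and into subranges of $a_1$). (c) Even the $k=1$ case is not ``a direct argument'': it splits into $a_1\le 1/\sqrt 2$ vs.\ $a_1>1/\sqrt 2$ and needs a separate boundary subcase where one tail coordinate hits $t$. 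None of this is filled in, and I see no indication that your sketch would land on estimates simple enough to verify without this structure.

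Finally, the $n=3$ endpoint computation is aimed at the wrong pair. By Proposition~\ref{prop2}, in the interval $[1/\sqrt 3, 0.6384]$ the competing maximizer is $a^{(2)}$ (the vector $a^{(3)}$ ceases to be extremal already at $t=1/\sqrt 3$, where $A_1(a^{(3)},1/\sqrt 3)$ is a boundary face). The threshold $t^\ast\simeq 0.6384$ in the theorem is the larger root of $A_1(a^{(2)},t)=A_1(a^{(1)},t)$, i.e.\ of $\sqrt 2(1-2t^2)=2(1-t)^2$, not of an equation involving $a^{(3)}$. Solving $A_1(a^{(3)},t)=A_1(a^{(1)},t)$ would give a different, incorrect threshold.
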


The basic formula for the parallel section function in Section 2 is determined by the Fourier transform method, as explained in Koldobsky's book \cite {K}. We later use the same technique for hyperplane sections in the two-dimensional complex case. In Section 3, we explain the strategy of the proof and prove some monotonicity results needed later. The proof of Theorem \ref{th1} is given in Section 4. In Sections 5 and 6 we consider non-central hyperplane sections of the complex balls $B_\infty^2(\C)$ and $B_1^2(\C)$. \\

\section{A formula for the parallel section function of the $l_1^n$-ball}

To prove Theorem \ref{th1}, we need an explicit formula for $A_1(a,t)$, $a \in S^{n-1}$, $t \ge 0$. By symmetry, we may assume for the coordinates of $a$ that $1 \ge a_1 \ge a_2 \ge \cdots \ge a_n \ge 0$ holds. Meyer and Pajor \cite {MP} mention a formula for slabs in $B_1^n$ which should be proven by induction. Differentiating their formula with respect to the width, one obtains

\begin{proposition}\label{prop1}
Fix $0 \le t \le 1$. Let $n \in \N$ and $a \in S^{n-1}$ with $1 > a_1 > a_2 > \cdots > a_n >0$. Put $x_+ = max (x, 0)$ for all $x \in \R$. Then
\begin{equation}\label{eq2.1}
A_1(a,t) = \frac {2^{n-1}}{(n-1)!} \sum_{j=1}^n \frac {a_j^{n-2} (a_j-t)_+^{n-1}} {\prod_{k=1, k \ne j}^n (a_j^2-a_k^2)} .
\end{equation}
\end{proposition}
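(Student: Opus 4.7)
I would follow the hint in the paragraph preceding the statement: take Meyer and Pajor's closed form for the volume of a symmetric slab in $B_1^n$ and differentiate in the slab half-width. Set
$$V_1(a,s) := \vol_n\bigl(B_1^n \cap \{x : |\pr{x}{a}| \le s\}\bigr), \qquad 0 \le s \le a_1.$$
Since $B_1^n$ is centrally symmetric, $A_1(a,\cdot)$ is even, so Fubini gives $V_1(a,s) = 2\int_0^s A_1(a,u)\,du$, and hence $2 A_1(a,s) = \tfrac{d}{ds} V_1(a,s)$. The Meyer--Pajor slab formula, for $1 > a_1 > \cdots > a_n > 0$, reads
$$V_1(a,s) = \frac{2^n}{n!}\left[\, 1 - \sum_{j=1}^n \frac{a_j^{n-2} (a_j-s)_+^n}{\prod_{k \ne j}(a_j^2 - a_k^2)} \,\right].$$
This is consistent at both endpoints: at $s=0$ one recovers $V_1 = 0$ via the Lagrange identity $\sum_j (a_j^2)^{n-1}/\prod_{k \ne j}(a_j^2 - a_k^2) = 1$, while for $s \ge a_1$ all summands vanish and $V_1 = 2^n/n! = \vol_n(B_1^n)$.

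Granting this identity, the passage to \eqref{eq2.1} is a one-line differentiation: using $\tfrac{d}{ds}(a_j - s)_+^n = -n(a_j - s)_+^{n-1}$ (continuous at $s = a_j$ because of the order-$n$ vanishing), one obtains
$$2 A_1(a,s) = \frac{2^n}{(n-1)!} \sum_{j=1}^n \frac{a_j^{n-2} (a_j-s)_+^{n-1}}{\prod_{k \ne j}(a_j^2 - a_k^2)} ,$$
which is \eqref{eq2.1} after dividing by $2$. As a sanity check, the reader can verify the cases $n=1$ (where the empty product and $(a_1 - t)_+^0$ reproduce $A_1(1,t) = \mathbf{1}_{[0,1)}(t)$) and $n=2$ (where the expression collapses to $A_1(a,t) = 2[(a_1-t)_+ - (a_2-t)_+]/(a_1^2 - a_2^2)$, which at $t=0$ gives the correct central section length $2/(a_1+a_2)$).

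The substantive work is therefore to prove the slab identity. I would do so by induction on $n$, integrating out the last coordinate:
$$V_1(a,s) = \int_{-1}^{1} \vol_{n-1}\bigl( (1-|x_n|) B_1^{n-1} \cap \{y : |\pr{y}{\tilde a} - a_n x_n| \le s\}\bigr) \, dx_n ,$$
with $\tilde a = (a_1,\dots,a_{n-1})$. An asymmetric slab is the difference of two symmetric ones, so after rescaling by $1 - |x_n|$ the induction hypothesis produces integrands of the form $(\tilde a_j(1-|x_n|) \pm (s - a_n x_n))_+^n$, divided by $\prod_{k < n, k \ne j}(a_j^2 - a_k^2)$. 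Expanding the $n$-th powers by the binomial theorem and integrating term by term in $x_n$ produces polynomials in the $a_j$'s whose low-order contributions can be discarded by the Lagrange identity
$$\sum_{j=1}^{n-1} \frac{p(a_j^2)}{\prod_{k<n,\ k \ne j}(a_j^2 - a_k^2)} = 0 \qquad \text{for } \deg p < n-2,$$
while the surviving terms reassemble into the claimed $n$-variable expression, with the new factor $a_j^2 - a_n^2$ emerging in the denominators.

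The hard part will be precisely this bookkeeping: tracking the creation of the factor $a_j^2 - a_n^2$ and verifying that the numerators after summation collapse to $a_j^{n-2}(a_j - s)_+^n$. A secondary issue is that the formula presupposes strictly distinct $a_j$; however, both sides of \eqref{eq2.1} depend continuously on $a$ (by dominated convergence applied to the defining integral for $A_1$), and the apparent singularities in the rational function are removable by L'H\^opital, so the formula extends to general $a \in S^{n-1}$ with $a_j \ge 0$ by a limiting argument. This continuous extension is what the rest of the paper will tacitly use when several $a_j$ coincide.
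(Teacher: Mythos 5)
Your proposal is correct, but it takes the alternative route that the paper mentions in the sentence just before the statement and then deliberately declines to follow. The paper's proof (after Brzezinski) computes the Fourier transform $\hat{\chi}_{B_1^n}(x)$ in closed form by iterated integration, uses Lemma~\ref{lemma1} to show the apparent order-$n$ singularity of $\hat{A_1}(a,s)$ at $s=0$ is removable, and then recovers $A_1(a,t)$ by an inverse Fourier transform evaluated via the residue theorem. You instead start from the Meyer--Pajor slab-volume identity $V_1(a,s)=\tfrac{2^n}{n!}\bigl[1-\sum_j a_j^{n-2}(a_j-s)_+^n/\prod_{k\ne j}(a_j^2-a_k^2)\bigr]$, check its endpoints with the same Vandermonde identity of Lemma~\ref{lemma1}, and differentiate in $s$ — an entirely elementary route with a cleaner geometric meaning. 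Both proofs hinge on the same Lagrange interpolation identities. The trade-off is that the Fourier machinery the paper sets up here is reused almost verbatim in Sections~5--6 for the complex balls, whereas your slab method would have to be set up again there; on the other hand, your approach avoids residue computations and makes the continuity/removability in $a$ transparent. The one genuine gap in your write-up is that the inductive proof of the slab identity itself is only sketched; you correctly identify the bookkeeping (emergence of the factor $a_j^2-a_n^2$ in the denominator, collapse of the numerator to $a_j^{n-2}(a_j-s)_+^n$ via the degree-$<n-2$ Lagrange relations) as the substantive work, but that induction needs to be carried through to make the argument complete. Since the paper cites Meyer--Pajor for the slab formula without proof, your level of detail is defensible, provided you are willing to lean on that reference.
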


For $t \ge \frac 1 {\sqrt 2}$, there is only one non-zero term in the sum, and this formula was used by Liu and Tkocz \cite{LT}. For $\frac 1 {\sqrt 3} \le t < \frac 1 {\sqrt 2}$ two non-zero terms are possible, formally with an alternating behavior and a singularity. This results in difficulties determining extrema of $A_1(a,t)$. Brzezinski \cite{Br} proved Proposition \ref{prop1} in detail using the Fourier transform method. Since Proposition \ref{prop1} is essential for our proof of Theorem \ref{th1}, we now outline Brzezinski's main arguments. The first step is

\begin{lemma}\label{lemma1}
Let $n, l \in \N$ with $0 \le l \le n-2$. Then for all $x \in \R^n$ with $x_1 > x_2 > \cdots > x_n >0$
\begin{equation}\label{eq2.2}
\sum_{j=1}^n \frac{x_j^{2l}}{\prod_{k=1, k \ne j}^n (x_j^2-x_k^2)} =0 \; , \; \sum_{j=1}^n \frac{x_j^{2(n-1)}}{\prod_{k=1, k \ne j}^n (x_j^2-x_k^2)} =1 \ .
\end{equation}
\end{lemma}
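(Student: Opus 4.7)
The plan is to substitute $y_j := x_j^2$ so both identities take the uniform shape
$$S_l(y) := \sum_{j=1}^n \frac{y_j^{l}}{\prod_{k\ne j}(y_j - y_k)}, \qquad 0 \le l \le n-1,$$
and then recognize $S_l$ as the leading (i.e.\ $y^{n-1}$) coefficient of the Lagrange interpolation polynomial of $p(y) = y^{l}$ through the $n$ distinct nodes $y_1 > y_2 > \cdots > y_n > 0$.

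Concretely, I would invoke the standard Lagrange interpolation formula: for any polynomial $p$ of degree at most $n-1$,
$$p(y) \;=\; \sum_{j=1}^n p(y_j)\,\prod_{k \ne j}\frac{y - y_k}{y_j - y_k}.$$
Applying this to $p(y) = y^{l}$ (which is admissible precisely because $l \le n-1$), and reading off the coefficient of $y^{n-1}$ on both sides, the right-hand side contributes $\sum_{j=1}^n y_j^{l} / \prod_{k \ne j}(y_j - y_k) = S_l$, while the left-hand side contributes $0$ when $l \le n-2$ and $1$ when $l = n-1$. This gives both assertions of the lemma simultaneously after substituting back $y_j = x_j^2$.

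A clean alternative, in case one prefers to avoid invoking Lagrange interpolation by name, is to perform partial fractions directly on
$$f(y) \;=\; \frac{y^{l}}{\prod_{j=1}^n (y - y_j)} \;=\; \sum_{j=1}^n \frac{A_j}{y - y_j}, \qquad A_j = \frac{y_j^{l}}{\prod_{k\ne j}(y_j - y_k)},$$
and then compute $\sum_j A_j = \lim_{y \to \infty} y\, f(y)$, which equals $0$ for $l \le n-2$ and $1$ for $l = n-1$ by degree considerations. Either route turns the identity into a one-line consequence of a familiar rational-function fact.

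There is essentially no obstacle here: the distinctness $x_1 > \cdots > x_n > 0$ guarantees distinct nodes $y_j$ so that the denominators are nonzero, and the constraint $l \le n-1$ is exactly what is needed for the Lagrange interpolation (equivalently, the decay $y f(y) \to 0$ or $1$) to give the stated values. The only thing worth highlighting in the write-up is that both identities of \eqref{eq2.2} are instances of the single statement above, corresponding to the two regimes $l \le n-2$ and $l = n-1$ for the leading coefficient.
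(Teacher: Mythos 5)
Your proof is correct, and both routes you offer (Lagrange interpolation, or equivalently the partial-fraction/residue-at-infinity computation) work cleanly. The paper's proof is genuinely different in presentation: it sets up the Vandermonde matrix $V = (y_0, y_2, \dots, y_{2(n-1)})$ in the powers $x_j^{2k}$, builds an auxiliary matrix $W^{(l)}$ by repeating the column $y_{2l}$ and dropping $y_{2(n-1)}$, and reads the first identity off from $\det(W^{(l)}) = 0$ via cofactor expansion, with the second identity coming from expanding $\det(V)$ itself along its last column. Of course the two viewpoints are the same thing in disguise --- the quantities $\det(V^{(j)})/\det(V)$ that appear in the cofactor expansion are exactly the leading coefficients of the Lagrange basis polynomials, and $\sum_j A_j$ is the residue at infinity of your $f$ --- but your argument requires no matrix setup and makes the role of the degree bound $l \le n-1$ transparent as a polynomial-degree condition. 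What the paper's determinantal framing buys is that it foreshadows the Vandermonde products $\prod_{k<j}(x_j^2 - x_k^2)$ that recur throughout Proposition~\ref{prop1} and its proof, so the machinery is not wasted; your version is the shorter, more self-contained route if one only wants Lemma~\ref{lemma1} in isolation. One small point worth flagging in a write-up: state explicitly that the substitution $y_j = x_j^2$ preserves distinctness (which it does, since $x_j > 0$), as that is what licenses the Lagrange nodes being distinct.
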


\begin{proof}
Let $y_{2k} :=(x_1^{2k}, \cdots , x_n^{2k}) \in \R^n$ and $y_{2k}^{(j)} :=(x_1^{2k}, \cdots , \widehat{x_j^{2k}}, \cdots x_n^{2k}) \in \R^{n-1}$, where the hat indicates omitting the $j$-th coordinate, be column vectors. Consider the matrices
$V = (y_0,y_2, \cdots , y_{2(n-1)})_{n \times n}$, $V^{(j)}= (y_0^{(j)}, \cdots , y_{2(n-2)}^{(j)})_{(n-1) \times (n-1)}$ and  \\
$W^{(l)} = (y_0, \cdots , y_{2l}, y_{2l}, \cdots , y_{2(n-2)})_{n \times n}$ : in $W^{(l)}$ column $y_{2l}$ is repeated and column $y_{2(n-1)}$ is omitted. Thus $det(W^{(l)}) = 0$. The Vandermonde determinant of $V$ is $det(V) = \prod_{1 \le k< j \le n} (x_j^2-x_k^2)$. Expanding the determinant of $W^{(l)}$ along its $j$-th column yields
$$0 = \frac{det(W^{(l)})}{det(V)} = \frac 1 {det(V)} \sum_{j=1}^n (-1)^l x_j^{2l} det(V^{(j)}) = \sum_{j=1}^n \frac{x_j^{2l}}{\prod_{k=1, k\ne j}^n (x_j^2-x_k^2)} \ . $$
The second equation follows similarly by expanding $det(V)$ along its last column.
\end{proof}

In Meyer, Pajor \cite{MP} there is a remark that expressions of the above type are related to Vandermonde determinants.

\vspace{0.5cm}

{\it Proof of Proposition \ref{prop1}.} \\
Brezezinski \cite{Br} calculates the Fourier transform of the $l_1^n$-ball
\begin{align}\label{eq2.3}
\hat{\chi}_{B_1^n}(x) &= \frac 1 {(2 \pi)^{n/2}} \int_{-1}^1 e^{-ix_1 y_1} \int_{-(1-|y_1|)}^{1-|y_1|} e^{-ix_2y_2} \cdots \int_{-(1-\sum_{j=1}^{n-1} |y_j|)}^{1-\sum_{j=1}^{n-1} |y_j|} e^{-ix_ny_n} dy_n \cdots dy_2 dy_1 \nonumber \\
&= (\frac 2 \pi)^{n/2} (-1)^{[n/2]} \sum_{j=1}^n \frac {x_j^{n-2} f(x_j)}{\prod_{k=1, k \ne j}^n (x_j^2-x_k^2)} \ ,
\end{align}
with $f(x)= \cos(x)$ if $n$ is even and $f(x)=\sin(x)$ if $n$ is odd. The last equality is shown by induction on $n$, starting with $n=2$, and using \eqref{eq2.2} in the calculation. Then $\hat{A_1}(a,s) = (2 \pi)^{(n-1)/2} \hat{\chi}_{B_1^n}(s a)$. By \eqref{eq2.3}, this function seems to have a singularity of order $\frac{s^{n-2}}{s^{2(n-1)}} = \frac 1 {s^n}$ at $s=0$. However, the singularity is removable: Again using Lemma \ref{lemma1}, one shows for all $l \le n-1$ if $n$ is even
$$\frac {d^l}{ds^l} \Big(\sum_{j=1}^n \frac{a_j^{n-2} \cos(a_j s)}{\prod_{k=1, k\ne j}^n (a_j^2-a_k^2)} \Big)_\Big{|s=0} = 0 \ , $$
whereas for $l = n-1$ this is equal to $(-1)^{[n/2]}$. If $n$ is odd, one has to replace $\cos(a_js)$ here by $\sin(a_j s)$. L'Hospital's rule then shows
$$\lim_{s \to 0} \hat{A_1}(a,s) = \frac 1 {\sqrt {2 \pi}} \frac{2^n}{n!} (-1)^m \ , $$
with $m=n$ if $n$ is even and $m=n-1$ if $n$ is odd. Then
\begin{align*}
A_1(a,t) &= \frac 1 {\sqrt{2 \pi}} \int_\R \hat{A_1}(a,s) e^{i s t} dt \\
&= \RR \Big( (-1)^{n/2} \frac {2^{n-2}}{\pi} \int_\R \sum_{j=1}^n \frac {a_j^{n-2}}{\prod_{k=1, k \ne j}^n (a_j^2-a_k^2)} \frac {e^{i(a_j+t)s} + sgn(a_j-t)^n e^{i |a_j-t|s}}{s^n} ds \Big) \ ,
\end{align*}
where $\RR$ denotes the real part if $n$ is even and the imaginary part if $n$ is odd. The function in the integrand has a pole of order $n$ at zero, and the integral can be evaluated using the residue theorem. The result is
$$A_1(a,t) = \frac {2^{n-1}}{(n-1)!} \sum_{j=1}^n \frac {a_j^{n-2} (a_j-t)_+^{n-1}} {\prod_{k=1, k \ne j}^n (a_j^2-a_k^2)} \ . $$
\hfill $\Box$

\vspace{0,5cm}

For $n=2$, Proposition \ref{prop1} yields that $A_1(a,t) = \frac 2 {a_1+a_2}$ if $a_1 \ge a_2 > t$ and $A_1(a,t) = 2 \frac {a_1-t}{a_1^2-a_2^2}$ if $a_1 \ge t >a_2$. This implies with $a^{[t]}:=(\sqrt{1-t^2},t)$ that the extremal sections of $B_1^2$ at distance $t$ are given by
$$ Maximum :  \left\{\begin{array}{c@{\quad}l}
a^{[t]} \ , &   0 \le t \le \frac 1 {{\sqrt 2}} \\
a^{(1)} \ , &   \frac 3 4 \le t < 1
\end{array}\right\}  \; , \;
Minimum : \left\{\begin{array}{c@{\quad}l}
a^{(2)} \ , &   0 \le t \le {\sqrt 2} - 1 \\
a^{(1)} \ , &   {\sqrt 2} -1 \le t < \frac 1 {{\sqrt 2}}
\end{array}\right\}  \ . $$
For $t \in (\frac 1 {\sqrt 2}, \frac 3 4)$, the maximal vector $a_{max}$ rotates from $a^{[1/\sqrt 2]}=a^{(2)}$ to $a^{(1)}$ continuously. But there is a discontinuity of $A_1(a_{max},t)$ at $t= \frac 1 {\sqrt 2}$, cf. Liu, Tkocz \cite{LT}. \\

For $n=3$, Brzezinski \cite{Br} used Proposition \ref{prop1} and tedious calculations to determine the extremal sections of $B_1^n$ at distance $t$ from the origin. His result is

\begin{proposition}\label{prop2}
For $n=3$ and $0 \le t \le 1$, let $a^{[t]} := (\sqrt{1-2 t^2},t,t)$. Then $A_1(.,t)$ has the following extrema
$$ Maximum : \left\{\begin{array}{c@{\;}l}
a^{[t]} , &  0 \le t \le \frac 1 {{\sqrt 6}} \\
a^{(3)} , &  0.4489 \le t < \frac 1 {{\sqrt 3}} \\
a^{(2)} , &  0.5826 \le t \le 0.6384 \\
a^{(1)} , &  0.6384 \le t < 1
\end{array}\right\} ,
Minimum : \left\{\begin{array}{c@{\;}l}
a^{(3)} , &  0 \le t \le 0.2125 \\
a^{(1)} , &  0.2125 \le t < \frac 1 {{\sqrt 3}}
\end{array}\right\} . $$
For $t \in [\frac 1 {\sqrt 6}, 0.4489]$, there is a continuous rotation of the maximal vector $a_{max}$ from $a^{[1/\sqrt 6]}$ to $a^{(3)}$ and for $t \in [\frac 1 {\sqrt 3}, 0.5826]$ from $a^{(3)}$ to $a^{(2)}$. Still, there is a discontinuity of $A_1(a_{max},t)$ at $t = \frac 1 {\sqrt 3}$. \\
One has $A_1(a^{(1)},t)= 2 (1-t)^2$, $A_1(a^{(2)},t)= \sqrt 2 (1-2 t^2)$, $A_1(a^{(3)},t)= \frac{3 \sqrt 3} 4 (1-t^2)$.
\end{proposition}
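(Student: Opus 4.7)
The plan is to apply the formula of Proposition \ref{prop1}, which for $n=3$ reads
$$A_1(a,t) = 2 \sum_{j=1}^3 \frac{a_j (a_j-t)_+^2}{\prod_{k \ne j}(a_j^2 - a_k^2)},$$
and, by symmetry, to optimize over the spherical triangle $\Delta = \{a \in S^2 : a_1 \ge a_2 \ge a_3 \ge 0\}$. First I would compute the benchmark values stated in the proposition: $A_1(a^{(1)},t) = 2(1-t)^2$ from a single surviving summand, $A_1(a^{(2)},t) = \sqrt{2}\,(1-2t^2)$ from the two-term limit as $a_3 \to 0$, and $A_1(a^{(3)},t) = \tfrac{3\sqrt{3}}{4}(1-t^2)$ by a Taylor expansion resolving the triple coincidence $a_1 = a_2 = a_3$. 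Pairwise matching then yields some of the crossover thresholds explicitly: $A_1(a^{(1)},t) = A_1(a^{(3)},t)$ gives $t = (8-3\sqrt{3})/(8+3\sqrt{3}) \simeq 0.2125$, and $A_1(a^{(1)},t) = A_1(a^{(2)},t)$ gives $\simeq 0.6384$.

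Next I would partition $\Delta$ according to $k := \#\{j : a_j > t\} \in \{1,2,3\}$; the admissible values of $k$ depend on $t$ (for example $k=3$ is possible only when $t < 1/\sqrt{3}$). On each region $A_1$ is smooth, with removable singularities along the diagonals $a_j = a_k$ handled by Taylor expansion, so interior critical points on the constraint $|a|^2 = 1$ are characterized by a Lagrange system in two free coordinates. For each sub-interval of $t$ I would enumerate the interior critical points together with all boundary candidates on $\partial \Delta$: the vertices $a^{(1)}, a^{(2)}, a^{(3)}$; the three edges $\{a_1 = a_2\}$, $\{a_2 = a_3\}$ (which contains $a^{[t]}$), $\{a_3 = 0\}$; and the transition curves $\{a_j = t\}$ where the number of active terms drops. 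One-variable calculus along $\{a_2 = a_3\}$ identifies $a^{[t]}$ as the unique interior edge critical point, which is the edge maximum precisely when $t \le 1/\sqrt{6}$.

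Finally, for each $t$ one compares all candidate values and identifies the global extremizer. The main obstacle is the combinatorial branching of the case analysis: the ``rotating'' maximizer on $[1/\sqrt{6}, 0.4489]$ and $[1/\sqrt{3}, 0.5826]$ is a critical point whose coordinates satisfy a coupled pair of polynomial equations with no elementary parameterization in $t$. One must verify that the first branch continuously joins $a^{[1/\sqrt{6}]}$ to $a^{(3)}$ with $A_1$-value matching $A_1(a^{(3)},t)$ at $t = 0.4489$; that $a^{(3)}$ remains optimal on $[0.4489, 1/\sqrt{3}]$ but becomes infeasible at $t = 1/\sqrt{3}$, producing the discontinuity of $A_1(a_{\max}, t)$; that a second rotating branch continuously joins $a^{(3)}$ to $a^{(2)}$ on $[1/\sqrt{3}, 0.5826]$; and finally that $a^{(2)}$ is overtaken by $a^{(1)}$ at $t = 0.6384$. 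The minimum problem is tamer: one need only show that no interior critical point undercuts $\min\{A_1(a^{(1)},t), A_1(a^{(3)},t)\}$ on $[0, 1/\sqrt{3})$, leaving the single crossover at $t \simeq 0.2125$.
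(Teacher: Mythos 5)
The paper does not give its own proof of Proposition~\ref{prop2}; it states the result and attributes it to Brzezinski's dissertation~\cite{Br}, remarking only that he ``used Proposition~\ref{prop1} and tedious calculations.'' Your outline is therefore consistent in spirit with the approach the paper claims Brzezinski took, and your benchmark computations are correct: the three displayed values of $A_1(a^{(k)},t)$ follow from Proposition~\ref{prop3} with $n=3$, the crossover $A_1(a^{(1)},t)=A_1(a^{(3)},t)$ at $t=(8-3\sqrt3)/(8+3\sqrt3)\simeq0.2125$ is exact, and the crossover at $t\simeq0.6384$ from $A_1(a^{(1)},t)=A_1(a^{(2)},t)$ checks out numerically.

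That said, what you have written is a strategy rather than a proof, and you say so yourself. All the substantive content of Proposition~\ref{prop2} --- the thresholds $0.4489$ and $0.5826$, the existence, uniqueness and monotone rotation of the two non-elementary critical-point branches joining $a^{[1/\sqrt6]}$ to $a^{(3)}$ and $a^{(3)}$ to $a^{(2)}$, the claim that $a^{[t]}$ is the unique interior edge maximum on $\{a_2=a_3\}$ precisely for $t\le1/\sqrt6$, the discontinuity of $A_1(a_{\max},\cdot)$ at $t=1/\sqrt3$, and the exhaustion showing no interior critical point of the full two-variable Lagrange system undercuts or overshoots the listed candidates --- is left as a list of items ``one must verify.'' Carrying these out is exactly the ``tedious calculations'' the paper defers to~\cite{Br}; a particular concern is that the removable singularities along $a_i=a_j$ and the transition curves $a_j=t$ produce several distinct smooth pieces of $A_1$ whose critical sets must each be analyzed and matched, and your plan does not address how to bound or enumerate the interior critical points away from the edges. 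As an answer to ``prove Proposition~\ref{prop2},'' this is a roadmap with the hard parts unfilled, but it is a correct roadmap and it matches what the paper describes as Brzezinski's method.
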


For $t = \sqrt 6 - \sqrt{9/2}$, $a^{(2)}$ gives an equilateral, but irregular hexagonal section. For $t=0$, the hyperplane perpendicular to $a^{(3)}$ intersects $B_1^3$ in a regular hexagon. \\

Note that $A_1(a^{(n)},\frac 1 {\sqrt n})$ is the volume of a boundary $(n-1)$-simplex, i.e.
$$A_1(a^{(n)},\frac 1 {\sqrt n}) = \frac{{\sqrt n}}{(n-1)!} =  \frac{2^{n-1}}{(n-1)!} \ \frac{{\sqrt n}}{2^{n-1}} \ . $$
We have
$$A_1(a^{(1)},\frac 1 {\sqrt n}) = \frac{2^{n-1}}{(n-1)!} (1- \frac 1 {{\sqrt n}})^{n-1} > A_1(a^{(n)},\frac 1 {\sqrt n}) $$
if and only if $n \ge 7$. Hence, different from the cases $n=2, 3$, $a^{(n)}$ does not yield the maximal hyperplane at distance $t= \frac 1 {\sqrt n}$, if $n \ge 7$. \\

Suppose that $ 1 > a_1 > a_2 > \cdots > a_{k-1} >t > a_k, \cdots a_n \ge 0$. Then by continuity, formula \eqref{eq2.1} also holds if some or all of the $a_j$ with $j \ge k$ are equal or are zero. This can be used to determine $A_1(a^{(k)},t)$ for $0 < t < \frac 1 {\sqrt k}$.

\begin{proposition}\label{prop3}
Let $n, k \in \N$ with $1 \le k \le n$ and $0 \le t < \frac 1 {\sqrt k}$. Then
\begin{equation}\label{eq2.4}
A_1(a^{(k)},t) = \frac{2^{n-1}}{(n-1)!} \frac 1 {(k-1)!} \frac {d^{k-1}}{dx^{k-1}} \Big( \frac {(\sqrt{x}-t)^{n-1}}{x^{n/2-k+1}} \Big)_\Big{|x=\frac 1 k} \ .
\end{equation}
In particular, for $k=1, 2, 3$, $A_1(a^{(k)},t)$ equals $\frac{2^{n-1}}{(n-1)!} $ times \\
$ \quad k=1 : (1-t)^{n-1} \quad , \quad  t < 1 $ \\
$ \quad k=2 : \frac 1 {\sqrt 2} (1 + {\sqrt 2} (n-2) t) (1- {\sqrt 2} \ t)^{n-2} \quad , \quad  t < \frac 1 {\sqrt 2} $ \\
$ \quad k=3 : \frac{3 {\sqrt 3}} 8 ( 1 + {\sqrt 3} (n-3) t + (n-2)(n-4) t^2) (1- {\sqrt 3} \ t)^{n-3} \quad , \quad t < \frac 1 {\sqrt 3} \ .$ \\
In general,
$$A_1(a^{(k)},t) =  \frac{2^{n-1}}{(n-1)!} \ b_k \ p_{k-1}(n,t) (1 - {\sqrt k} \ t)^{n-k} \quad , \quad t < \frac 1 {\sqrt k} \ , $$
where $p_{k-1}$ is a $(k-1)$-st order polynomial in $t$ with coefficients depending on $n$, which is starting with $1$ and $b_k = \binom{2k-2}{k-1} \frac{{\sqrt k}}{4^{k-1}}$; $b_k$ is decreasing to $\frac 1 {\sqrt \pi}$ as $k \to \infty$.
\end{proposition}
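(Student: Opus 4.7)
I would derive \eqref{eq2.4} as a confluent limit of formula \eqref{eq2.1}. Fix $0 < t < 1/\sqrt{k}$ and consider a perturbation of $a^{(k)}$ of the form $a = (a_1, \ldots, a_k, 0, \ldots, 0) \in S^{n-1}$ with $1 > a_1 > \cdots > a_k > t$ and all $a_j$ close to $1/\sqrt{k}$. By the continuity remark immediately preceding the proposition, \eqref{eq2.1} applies to this $a$. Writing $x_j := a_j^2$, the terms with $j > k$ vanish since $(a_j-t)_+ = 0$, while for $j \le k$ one has $\prod_{l \ne j}(a_j^2 - a_l^2) = \bigl(\prod_{l \le k,\, l \ne j}(x_j - x_l)\bigr)\,x_j^{n-k}$. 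Using $a_j^{n-2} = x_j^{(n-2)/2}$ together with $(n-2)/2 - (n-k) = -(n/2-k+1)$, the $j$-th term reduces to $g(x_j)/\prod_{l \le k,\, l \ne j}(x_j - x_l)$ where $g(x) := (\sqrt{x}-t)^{n-1}/x^{n/2-k+1}$. Summed over $j = 1, \ldots, k$, this is precisely the Lagrange form of the $(k-1)$-st divided difference $g[x_1, \ldots, x_k]$. Letting $x_1, \ldots, x_k \to 1/k$, the confluent limit of divided differences gives $g[x_1, \ldots, x_k] \to g^{(k-1)}(1/k)/(k-1)!$, and multiplication by the prefactor $2^{n-1}/(n-1)!$ yields \eqref{eq2.4}.

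The explicit cases $k = 1, 2, 3$ follow by direct differentiation and collection of powers of $1 - \sqrt{k}\,t$. For the general structural form, observe that each derivative $d/dx$ applied to $(\sqrt{x}-t)^{n-1}$ lowers its exponent by at most one (only the chain-rule piece $\frac{n-1}{2\sqrt{x}}(\sqrt{x}-t)^{n-2}$ does so), so after $k-1$ differentiations the smallest remaining power of $\sqrt{x}-t$ is $n-k$. Factoring $(\sqrt{x}-t)^{n-k}\bigr|_{x=1/k} = (1-\sqrt{k}\,t)^{n-k}/k^{(n-k)/2}$ out of $g^{(k-1)}(1/k)$ leaves a polynomial of degree at most $k-1$ in $t$; normalizing its constant term to $1$ defines $p_{k-1}(n,t)$ and determines $b_k$.

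To identify $b_k$ explicitly, evaluate at $t = 0$: then $g(x) = x^{(n-1)/2}/x^{n/2-k+1} = x^{k-3/2}$, so $g^{(k-1)}(x) = \prod_{i=0}^{k-2}(k - 3/2 - i)\,x^{-1/2} = \frac{(2k-3)!!}{2^{k-1}}\,x^{-1/2} = \frac{(2k-2)!}{4^{k-1}(k-1)!}\,x^{-1/2}$, whence $g^{(k-1)}(1/k)/(k-1)! = \binom{2k-2}{k-1}\sqrt{k}/4^{k-1} = b_k$. Monotone decrease follows from $(b_{k+1}/b_k)^2 = (2k-1)^2(k+1)/(4k^3) = 1 - (3k-1)/(4k^3) < 1$, and Stirling's estimate $\binom{2k-2}{k-1} \sim 4^{k-1}/\sqrt{\pi(k-1)}$ yields $b_k \to 1/\sqrt{\pi}$. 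The main subtlety is organizing the two superimposed limits---first $x_{k+1}, \ldots, x_n \to 0$ (covered by the continuity remark preceding the proposition), then $x_1, \ldots, x_k \to 1/k$ (confluent divided differences)---but both are standard once the divided-difference structure is recognized.
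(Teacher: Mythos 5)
Your argument is correct and takes essentially the same route as the paper: perturb $a^{(k)}$ to a nearby vector with distinct nonzero first $k$ coordinates and zeros elsewhere, apply the continuity extension of \eqref{eq2.1}, and pass to the confluent limit. The one place where you genuinely improve on the paper's exposition is in recognizing the sum over $j\le k$ explicitly as the Lagrange form of the $(k-1)$-st divided difference $g[x_1,\ldots,x_k]$ of $g(x)=(\sqrt x-t)^{n-1}/x^{n/2-k+1}$; the paper instead works with $\varepsilon$-spaced finite differences in the variable $x=a_j^2$ and asserts convergence to $g^{(k-1)}(1/k)/(k-1)!$, which is of course the same limit but stated less transparently. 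Your derivation of $b_k$ is also self-contained and cleaner than the paper's: rather than citing Meyer--Pajor for the $t=0$ value, you observe that $g(x)|_{t=0}=x^{k-3/2}$, differentiate $k-1$ times to get $\frac{(2k-2)!}{4^{k-1}(k-1)!}x^{-1/2}$, evaluate at $x=1/k$, and divide by $(k-1)!$, which gives $b_k=\binom{2k-2}{k-1}\sqrt{k}/4^{k-1}$ directly; your verification that $(b_{k+1}/b_k)^2=1-(3k-1)/(4k^3)<1$ and the Stirling limit $1/\sqrt\pi$ are likewise correct and are not spelled out in the paper. Both proofs rely on the same continuity extension of \eqref{eq2.1}; the only cosmetic gap in yours is that the degree of $p_{k-1}$ in $t$ can drop below $k-1$ for special values of $n$ (e.g.\ $n=4$, $k=3$), but the paper is equally informal on that point.
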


\begin{proof}
The case $k=1$ is clear from \eqref{eq2.1}. If $a_1 > a_2 >t$ satisfy $a_1^2+a_2^2 = 1$, we have by continuous extension of \eqref{eq2.1} for $a=(a_1,a_2,0, \cdots , 0)$
$$\frac{(n-1)!}{2^{n-1}} A_1(a,t) = \frac 1 {a_1^2-a_2^2} \Big( \frac{(a_1-t)^{n-1}}{a_1^{n-2}} - \frac{(a_2-t)^{n-1}}{a_2^{n-2}} \Big) \ . $$
For $k=2$, choose $a_{\varepsilon} := (\sqrt{\frac{1+\varepsilon}{2}},\sqrt{\frac{1-\varepsilon}{2}},0, \cdots ,0)$.
Then $a_{\varepsilon} \to a^{(2)}$ and $A_1(a_{\varepsilon},t) \to A_1(a^{(2)},t)$ as $\varepsilon \to 0$, if $t < \frac 1 {\sqrt 2}$ and
\begin{align*}
\frac{(n-1)!}{2^{n-1}} & A_1(a^{(2)},t) = \lim_{\varepsilon \to 0} \Big( \frac{(\sqrt{(1+\varepsilon)/2}-t)^{n-1}}{(\sqrt{(1+\varepsilon)/2})^{ n-2}} -  \frac{(\sqrt{(1-\varepsilon)/2}-t)^{n-1}}{(\sqrt{(1-\varepsilon)/2})^{n-2}} \Big) \Big/ \varepsilon \\
&= \frac d {dx}\Big (\frac{(\sqrt{x}-t)^{n-1}}{x^{n/2-1}} \Big)_\Big{|x=\frac 1 2} = \frac 1 {\sqrt 2} (1 + {\sqrt 2} (n-2) t) (1-{\sqrt 2} \ t)^{n-2} \ .
\end{align*}
For $a^{(k)}$ consider similarly $k$ equidistant points of step-size $\varepsilon$ centered around $\sqrt{\frac 1 k}$ and $n-k$ zero coordinates, yielding $k$ non-zero terms in \eqref{eq2.1}. Then the continuous extension of \eqref{eq2.1} yields that $A_1(a^{(k)},t)$ is the limit of $\frac 1 {(k-1)!}$ times the $(k-1)$-st order $\varepsilon$-size differences of the function $f(x) := \frac{({\sqrt x} -t)^{n-1}}{x^{n/2-k+1}}$ divided by $\varepsilon^{k-1}$ at $x = \frac 1 k$ as $\varepsilon \to 0$, if $t < \frac 1 {{\sqrt k}}$,  i.e.
$$ \frac{(n-1)!}{2^{n-1}} A_1(a^{(k)},t) = \frac 1 {(k-1)!} \frac {d^{k-1}} {dx^{k-1}} \Big (\frac{(\sqrt{x}-t)^{n-1}}{x^{n/2-k+1}} \Big)_\Big{|x=\frac 1 k} \ . $$
Since $a^{(k)}$ has $(n-k)$ zeros, the power in the denominator of $f$ is $(n-k)-(n-2)/2 = n/2-k+1$.
The general form of $A_1(a^{(k)},t)$ follows from \eqref{eq2.4} and the product and the quotient rule. The formula for $b_k$ comes from by Meyer, Pajor \cite{MP} for $t=0$.
\end{proof}

\section{Auxiliary results}

We now outline the basic steps of the proof of Theorem 1. Let $\frac 1 {\sqrt 3} \le t < \frac 1 {\sqrt 2}$ and $a = (a_j)_{j=1}^n \in S^{n-1}$ be given with $1 > a_1 > a_2 > a_3, \cdots , a_n \ge 0$. Two basic cases are possible: $a_1 > t \ge a_2$ and $a_1 > a_2 > t$. The first case is easier, since then there is only one term in formula \eqref{eq2.1} for the parallel section function. Here we distinguish two cases: $a_1 \le \frac 1 {\sqrt 2}$ and $a_1 > \frac 1 {\sqrt 2}$. In the first subcase, we show that $A_1(a,t) \le A_1(a^{(2)},t) < A_1(a^{(1)},t)$ for $n \ge 4$. In the second subcase, we prove that $A_1(a,t) < A_1(a^{(1)},t)$. In this case, $A_1(a,t)$ may be bigger than $A_1(a^{(2)},t)$. \\

The second case is more difficult: If $a_1 > a_2 > t$, formula \eqref{eq2.1} has two non-zero terms, alternating and with common denominator $\frac 1 {a_1^2-a_2^2}$. This formal (but removable) singularity as $a_2 \to a_1$ has to be overcome. First fixing $1 > a_1 > a_2$, we show that a maximum of $A_1(a,t)$ relative to this constraint requires that there are at most two different non-zero values among the coordinates $a_3, \cdots , a_n$. Then this is reduced to one non-zero value and only of multiplicity one in the coordinate sequence. Then we have to consider the vector $a = (a_1,a_2,\sqrt{1-a_1^2-a_2^2},0, \cdots , 0)$. Now fixing only $a_1$, we show that $A_1(a,t)$ is monotonically increasing in $a_2$, reducing $\sqrt{1-a_1^2-a_2^2}$. If $a_1 \le \frac 1 {\sqrt 2}$, we take the limit of $a_2 \to a_1$ using l'Hospital's rule and then $a_1 \to \frac 1 {\sqrt 2}$ yields a bound by $A_1(a^{(2)},t)$. If $a_1 > \frac 1 {\sqrt 2}$, we take $a_2 = \sqrt{1-a_1^2}$ and prove a bound by $A_1(a^{(1)},t)$. \\

In this section, we prove two monotonicity results for functions occurring in the second case $\frac 1 {\sqrt 3} \le t < a_2 < a_1$. Clearly $a_j < \frac 1 {\sqrt 3}$ for all $j \ge 3$. Then the proof of Theorem \ref{th1} will be reduced to the case that there is only one non-zero value among the coordinates $a_j, j \ge 3$, but may be of multiplicity $r$. Then with $c:=1-a_1^2-a_2^2$ formula \eqref{eq2.1} states that $A_1(a,t)$ is, up to a constant factor, equal to
$$\frac 1 {a_1^2-a_2^2} \Big( \frac{(a_1-t)^{n-1}}{a_1^{n-2}} (\frac {a_1^2}{a_1^2 - c/r})^r - \frac{(a_2-t)^{n-1}}{a_2^{n-2}} (\frac {a_2^2}{a_2^2 - c/r})^r \Big) \ . $$
We now show that this is a decreasing function of $r \ge 1$. For simplicity of notation we put $a = a_1$ and $b = a_2$. \\

\begin{proposition}\label{prop3.1}
Let $n \ge 4$, $r \ge 1$, $1 > a > b > t \ge \frac 1 {\sqrt 3}$ and $c := 1-a^2-b^2 >0$. Then the function $f : [1, \infty) \to \R$,
$$f(r) := \Big( \frac{(a-t)^{n-1}}{a^{n-2}} (\frac {a^2}{a^2 - c/r})^r -  \frac{(b-t)^{n-1}}{b^{n-2}} (\frac {b^2}{b^2 - c/r})^r \Big) $$
is decreasing for $r \ge 1$.
\end{proposition}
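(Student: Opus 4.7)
The plan is to show $f'(r) \le 0$ by reducing it first to a one-variable monotonicity question and then to a concrete algebraic inequality. Writing $\psi(x) := (x-t)^{n-1}/x^{n-2}$ and $G(x,r) := (1 - c/(rx^2))^{-r}$, so that $f(r) = \psi(a) G(a,r) - \psi(b) G(b,r)$, a direct calculation gives $\partial_r \log G(x,r) = -\omega(q)$, where $q := c/(rx^2)$ and
\[ \omega(q) := \log(1-q) + \frac{q}{1-q} = \sum_{k \ge 2} \frac{k-1}{k}\, q^k > 0. \]
Hence $f'(r) = -\psi(a) G(a,r)\omega(q_a) + \psi(b) G(b,r)\omega(q_b)$, where $q_a = c/(ra^2) < q_b = c/(rb^2)$. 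Setting $g(x) := \psi(x) G(x,r) \omega(c/(rx^2))$, the inequality $f'(r) \le 0$ is equivalent to $g(a) \ge g(b)$, and since $a > b$ it suffices to prove that $g$ is increasing on $[b,a]$.

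To establish monotonicity of $g$, I compute the logarithmic derivative. Using $\partial_x \log G = -2c/(x(x^2 - c/r))$, $\omega'(q) = q/(1-q)^2$, and $\partial_x q = -2q/x$,
\[ (\log g)'(x) = \frac{n-1}{x-t} - \frac{n-2}{x} - \frac{2c}{x(x^2 - c/r)} - \frac{2 q^2}{x (1-q)^2 \omega(q)}. \]
The key estimate is the termwise bound $\omega(q) \ge q^2/(2(1-q))$, which follows from comparing the series $\omega(q) = \sum_{k\ge 2} \frac{k-1}{k} q^k$ and $q^2/(2(1-q)) = \sum_{k\ge 2} q^k/2$ together with $(k-1)/k \ge 1/2$ for $k \ge 2$. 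After applying this bound and multiplying through by $x > 0$, the claim $(\log g)'(x) \ge 0$ reduces to
\[ \frac{(n-1)x}{x-t} \ge (n+2) + \frac{2c(r+2)}{rx^2 - c}. \]

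The right-hand side is decreasing in $r \ge 1$ (its $r$-derivative is $-2c(c+2x^2)/(rx^2-c)^2 < 0$), so it suffices to verify the case $r = 1$: $(n-1)x/(x-t) \ge (n+2) + 6c/(x^2 - c)$. For $n = 4$ this rearranges algebraically to $c \le x(2t-x)$. From $b > t \ge 1/\sqrt{3}$ and $a > b$ one has $c = 1 - a^2 - b^2 < 1 - a^2 - t^2$; and the condition $t \ge 1/\sqrt{3}$ is equivalent to $(1-t^2)/(2t) \le t < a$, which gives $1 - a^2 - t^2 \le a(2t - a)$. Since $x(2t-x)$ is a downward parabola with vertex at $x = t$ and is therefore decreasing on $(t,\infty)$, its minimum over $[b,a]$ is attained at $x = a$, so $c \le a(2t-a) \le x(2t-x)$ for all $x \in [b,a]$, as required. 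For $n \ge 5$ the reduced condition $c \le x^2[(n+2)t - 3x]/[3x + (n-4)t]$ is strictly weaker than $c \le x(2t-x)$ (the quotient $x(7t-3x)/(3x+t) - (2t-x)$ simplifies to $2t(x-t)/(3x+t) \ge 0$, and similarly for larger $n$), so it follows automatically.

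The main obstacle is that the termwise bound $\omega(q) \ge q^2/(2(1-q))$ is only tight at the lowest-order term of the series, so the reduced inequality $c \le x(2t-x)$ becomes sharp (at $x = a$) precisely in the extremal configuration $b \to t^+$, $c \to a(2t-a)$. The hypothesis $t \ge 1/\sqrt{3}$---which is exactly what forces $(1-t^2)/(2t) \le t$---is what makes the argument close, matching the range of validity in Theorem~\ref{th1}.
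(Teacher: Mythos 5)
Your proof is correct, and it takes a genuinely different route from the paper's. Both arguments start from the same observation that $\partial_r\log\bigl((a^2/(a^2-c/r))^r\bigr)=-\omega(q)$ with $\omega(q)=\log(1-q)+q/(1-q)$, but they diverge from there. The paper compares the two endpoint terms directly: it reduces $f'(r)\le 0$ to a ratio inequality $F/G\ge \omega(q_b)/\omega(q_a)$, then invokes the monotonicity of $\phi(x)=(x-\ln(1+x))/x^2$ to bound $\omega(q_b)/\omega(q_a)$ by $\bigl((a^2-c/r)/(b^2-c/r)\bigr)^2$, shows the resulting sufficient inequality is itself monotone in $r$, and finally checks $r=1$ via the algebraic fact $(a+b)t-ab-c\ge 0$. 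You instead observe $f'(r)=g(b)-g(a)$ for a single function $g(x)=\psi(x)G(x,r)\,\omega\!\bigl(c/(rx^2)\bigr)$ and prove $g$ is increasing on $[b,a]$ by computing $(\log g)'(x)$; the analogue of the paper's $\phi$-monotonicity is the series bound $\omega(q)\ge q^2/(2(1-q))$. Both approaches then reduce to $r=1$ by the same kind of $r$-monotonicity observation, and both need $t\ge 1/\sqrt 3$ in the last step, but your terminal condition $c\le x(2t-x)$ is cleaner and makes the role of the threshold transparent: it is exactly what makes $(1-t^2)/(2t)\le t$, and $n=4$ is visibly the critical case since for $n\ge 5$ the required bound weakens by exactly $2(n-4)t(x-t)/\bigl[(n-4)t+3x\bigr]\ge 0$. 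The trade-off is that your route requires the $x$-derivative computation of the three-factor product $g$, whereas the paper's ratio argument avoids it; on the other hand you avoid the separate lemma that $\phi$ is decreasing. Both are valid, and your version arguably gives a sharper picture of where the hypothesis $t\ge 1/\sqrt 3$ enters.
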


Note that $b^2-c = a^2 + 2 b^2 -1 >0$, $b^2 - c/r >0$, $a^2 - c/r >0$ for $r \ge 1$.

\begin{proof}
Let $g_a(r) := ( \frac {a^2}{a^2 - c/r} )^r$. Then
$$g_a'(r) = -g_a(r) ( \frac {c}{a^2 r - c} - \ln(1+ \frac c {a^2 r -c}) ) < 0 \ , $$
since $\ln (1+x) \le x$. Let $F :=  \frac{(a-t)^{n-1}}{a^{n-2}} g_a(r)$ and $G := \frac{(b-t)^{n-1}}{b^{n-2}} g_b(r)$. Then
$$f'(r) = - F ( \frac {c}{a^2 r - c} - \ln(1+ \frac c {a^2 r -c}) ) + G ( \frac {c}{b^2 r - c} - \ln(1+ \frac c {b^2 r -c}) ) \ . $$
This is $\le 0$ if and only if
\begin{equation}\label{eq3.1}
\frac F G  \ge  \frac {\frac {c}{b^2 r - c} - \ln(1+ \frac c {b^2 r -c})}{\frac {c}{a^2 r - c} - \ln(1+ \frac c {a^2 r -c})} \ .
\end{equation}
Now $\phi(x) := \frac{x-\ln(1+x)}{x^2}$ is decreasing in $x>0$, since $\phi'(x) = \frac 1 {x^3} (2 \ln(1+x)-\frac {x(2+x)}{1+x})$ and
$\psi(x) := 2 \ln(1+x) -\frac{x(2+x)}{1+x}$ satisfies $\psi'(x) = - \frac{x^2}{(1+x)^2} < 0$ with $\psi(0) = 0$. Thus $\phi$ is decreasing in $x>0$. Applying this to $x = \frac c {b^2 r-c}$ and $y = \frac c {a^2 r-c}$, $x > y$, shows that to prove \eqref{eq3.1} it suffices to show that
$$\frac F G \ge \Big( \frac{a^2-c/r}{b^2-c/r} \Big)^2 \quad , \text{i.e.} \ $$
\begin{equation}\label{eq3.2}
( \frac{a-t}{b-t} )^{n-1} (\frac b a )^{n-2} \ge ( \frac {b^2}{a^2} )^r ( \frac {a^2-c/r}{b^2-c/r} )^{r+2} \ .
\end{equation}
However, $h(r) := ( \frac {b^2}{a^2} )^r ( \frac {a^2-c/r}{b^2-c/r} )^{r+2}$ is decreasing in $r$, since with
$$\frac {b^2}{a^2} \frac {a^2-c/r}{b^2-c/r} = 1 + \frac{(a^2-b^2)c/r}{a^2(b^2-c/r)} $$
we have
\begin{align*}
(\ln h)'(r) &= \ln(1 + \frac{(a^2-b^2)c/r}{a^2(b^2-c/r)}) - \frac{(r+2)(a^2-b^2)c/r}{(a^2 r -c)(b^2 -c/r)} \\
&\le \frac{(a^2-b^2)c/r}{b^2-c/r}) ( \frac 1 {a^2} - \frac{r+2}{a^2 r - c} ) < 0 \ ,
\end{align*}
using $\ln(1+x)<x$ and $a^2 (r+2)> a^2 r-c$. Hence we only have to prove \eqref{eq3.2} for $r=1$, i.e.
\begin{equation}\label{eq3.3}
( \frac{a-t}{b-t} )^{n-1} (\frac b a )^{n-4} \ge  ( \frac {a^2-c}{b^2-c} )^3 \ .
\end{equation}
Since $\frac{(a-t) b}{(b-t)a} > 1$ and $n \ge 4$, the left side is bigger than $(\frac{a-t}{b-t} )^3$. Thus it suffices to show that
$(\frac{a-t}{b-t} ) \ge (\frac{a^2-c}{b^2-c} )$. This is equivalent to $(a-b) ((a+b) t - a b -c) \ge 0$. Inserting the value of $c = 1-a^2-b^2$, we find
\begin{align*}
(a+b) t - a b -c &= (a-b)^2 +(a+b) t + a b -1 \\
& \ge \frac{a+b}{{\sqrt 3}} + a b -1 = (b - \frac 1 {{\sqrt 3}})(b + \sqrt 3) + (b + \frac 1 {{\sqrt 3}})(a-b) > 0 \
\end{align*}
for all $a > b \ge t \ge \frac 1 {{\sqrt 3}}$. This implies \eqref{eq3.3} and ends the proof of Proposition \ref{prop3.1}.
\end{proof}

{\it Remark}. Similarly as above one shows that $(\frac {b^2}{a^2} )^{r-1} ( \frac {a^2-c/r}{b^2-c/r} )^{r+1}$ is decreasing in $r$. \\

\vspace{0,5cm}

For $r=1$, we are left in $A_1(a,t)$ essentially with
$$\frac 1 {a^2-b^2} \Big( \frac{(a-t)^{n-1}}{(2 a^2 + b^2 -1) a^{n-4}} - \frac{(b-t)^{n-1}}{(a^2 + 2 b^2 -1) b^{n-4}} \Big) \ , \ a > b \ge t \ge \frac 1 {{\sqrt 3}} \ . $$
To determine the maximum of this expression, we show that, fixing $a$, it is an increasing function of $b$, reducing $c = 1-a^2-b^2 >0$.

\begin{proposition}\label{prop3.2}
Let $a > b \ge t \ge \frac 1 {{\sqrt 3}}$, $b \le \frac 1 {{\sqrt 2}}$, $1-a^2-b^2 >0$ and $n \ge 4$. Fix $a, t$ and $n$. Then $f:[\frac 1 {{\sqrt 3}},\min(a,\sqrt{1-a^2})) \to \R$,
$$f(b) := \frac 1 {a^2-b^2} \Big( \frac{(a-t)^{n-1}}{(2 a^2 + b^2 -1) a^{n-4}} - \frac{(b-t)^{n-1}}{(a^2 + 2 b^2 -1) b^{n-4}} \Big) $$
is an increasing function of $b$.
\end{proposition}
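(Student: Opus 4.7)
Under the hypotheses, $c^2 := 1-a^2-b^2 > 0$ and $c < 1/\sqrt{3} \le t$. Hence by the continuity extension of Proposition~\ref{prop1}, $f(b)$ coincides, up to the positive constant $2^{n-1}/(n-1)!$, with the parallel section $A_1((a,b,c,0,\dots,0),t)$; the assertion of the proposition is that rotating the normal vector in the $(a_2,a_3)$-plane by increasing $a_2 = b$ (and decreasing $a_3 = c$ accordingly) strictly enlarges the section. Introduce $\psi(x) := (x-t)^{n-1} x^{4-n}$, whose derivative $\psi'(x) = (x-t)^{n-2} x^{3-n}[3x + (n-4)t]$ is positive on $[t,\infty)$ for $n \ge 4$, and set $p := a^2-b^2$, $q := 2a^2+b^2-1 = a^2-c^2$, $r := a^2+2b^2-1 = b^2-c^2$, all positive, satisfying the key identity $q - r = p$. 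Then $f(b) = (r\psi(a) - q\psi(b))/(p q r)$.

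\textbf{Partial-fraction decomposition and the easy positive term.} Using $q - r = p$ to split $r\psi(a) - q\psi(b) = q[\psi(a) - \psi(b)] - p\psi(a)$, and setting the divided difference $\Delta := (\psi(a) - \psi(b))/(a^2-b^2)$, one obtains
\[
f(b) = \frac{\Delta}{r} - \frac{\psi(a)}{qr}.
\]
Differentiation in $b$ (noting $p' = -2b$, $q' = 2b$, $r' = 4b$) produces
\[
f'(b) = \frac{\Delta' r - 4b\Delta}{r^2} + \frac{2b\,\psi(a)\,(r+2q)}{(qr)^2},
\]
and the second summand is strictly positive because $\psi(a) > 0$ and $r + 2q = 5a^2 + 4b^2 - 3 > 9b^2 - 3 \ge 0$, the last inequality coming from $b \ge t \ge 1/\sqrt{3}$ and the strict one from $a > b$. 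This is the first essential use of the hypothesis $t \ge 1/\sqrt{3}$.

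\textbf{Hard term and main obstacle.} Substituting $\Delta' = -\psi'(b)/p + 2b\Delta/p$ into the remaining summand gives
\[
\frac{\Delta' r - 4b\Delta}{r^2} = \frac{-r\psi'(b) + 2b(r-2p)\Delta}{pr^2}.
\]
The obstruction is that $r - 2p = 4b^2 - a^2 - 1$ changes sign on the admissible region; e.g.\ at $(a,b) = (0.7, 0.6)$ with $t = 1/\sqrt{3}$, $r-2p = -0.05$, so both pieces of the numerator are negative and must be outweighed by the easy term. My plan is to combine both summands of $f'(b)$ over the common denominator $p(qr)^2$, producing
\[
p(qr)^2 f'(b) = q^2\bigl[-r\psi'(b) + 2b(r-2p)\Delta\bigr] + 2bp\,\psi(a)\,(5a^2+4b^2-3),
\]
then substitute $\Delta = (\psi(a)-\psi(b))/p$, clear the remaining $1/p$, and reduce $f'(b) \ge 0$ to a polynomial inequality in $a, b, t$ whose coefficients depend on $n$. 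The main obstacle will be the algebraic identification of this polynomial as a non-negative combination of manifestly non-negative quantities such as $(a-b)$, $(a-t)$, $(b-t)$, $(1-a^2-b^2)$, $(5a^2+4b^2-3)$ and $(1/\sqrt{2}-b)$. I expect both remaining hypotheses to enter here essentially: $n \ge 4$ through the explicit form of $\psi'$ and the Binomial factorisation of $\psi(a)-\psi(b)$, and $b \le 1/\sqrt{2}$ to tame the negative contribution from the subregion $\{r < 2p\}$. A natural fallback, if a direct factorisation proves too intricate, is to prove $f'(b)\ge 0$ separately on the two subregions $\{r \ge 2p\}$ and $\{r \le 2p\}$ via a sign-chart analysis of the combined numerator, using the Cauchy mean-value form $\Delta = \psi'(\xi)/(2\xi)$ for some $\xi \in (b,a)$ to replace the divided difference by an explicit derivative.
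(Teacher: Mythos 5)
Your setup is algebraically correct and the partial-fraction decomposition is clean: with $\psi(x)=(x-t)^{n-1}x^{4-n}$, $p=a^2-b^2$, $q=2a^2+b^2-1$, $r=a^2+2b^2-1$, and $\Delta=(\psi(a)-\psi(b))/p$, the identity $f(b)=\Delta/r-\psi(a)/(qr)$ and the resulting expression
\[
pq^2r^2 f'(b) = q^2\bigl[-r\psi'(b)+2b(r-2p)\Delta\bigr]+2bp\,\psi(a)\,(r+2q)
\]
are both verified. You also correctly identify that $r+2q=5a^2+4b^2-3>0$ under $b\ge 1/\sqrt3$, so the second summand is positive, and you correctly flag that $r-2p=4b^2-a^2-1$ changes sign on the admissible region, so the first summand can be negative and must be compensated.

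However, this is where your argument stops being a proof. Everything after ``Hard term and main obstacle'' is a plan, not an execution: you state that you intend to reduce $f'(b)\ge 0$ to a polynomial inequality and ``expect'' the hypotheses $n\ge 4$ and $b\le 1/\sqrt2$ to enter, and you offer a ``natural fallback'' via a sign-chart and the Cauchy mean-value form of $\Delta$ --- but neither route is carried out, no polynomial is actually written down, and no non-negative factorisation is exhibited. The entire mathematical content of the proposition is exactly this inequality; without it, nothing is proved. For comparison, the paper's proof spends the bulk of its length on precisely this step: it takes the logarithmic derivative of $f$, reduces the claim to an inequality of the form $L\ge R$ with
\[
L=\Bigl(\frac{a-t}{b-t}\Bigr)^{n-1}\Bigl(\frac{b}{a}\Bigr)^{n-4},\qquad
R=\Bigl(\frac{2a^2+b^2-1}{a^2+2b^2-1}\Bigr)^{2}\Bigl(1+\frac{a^2-b^2}{2b}(\ln G)'(b)\Bigr),
\]
and then expands both sides as cubics in $(a-b)$, matching $L_1=R_1$ and proving $L_2\ge R_2$, $L_3\ge R_3$ via explicit estimates using $1/\sqrt3\le t\le b\le 1/\sqrt2$ and $n\ge 4$. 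Some such quantitative estimate is unavoidable here, and your write-up contains none. To close the gap you would need to actually produce the polynomial in $(a,b,t)$, organise it (e.g.\ in powers of $a-b$ as the paper does), and check that each coefficient is non-negative on the stated parameter range; alternatively, use the Cauchy mean-value substitution $\Delta=\psi'(\xi)/(2\xi)$ to reduce to a two-point inequality, but then you must actually prove that two-point inequality. As it stands, your proposal correctly identifies what needs to be shown but does not show it.
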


\begin{proof}
Let $F(b) := \frac{(a-t)^{n-1}}{(2 a^2 + b^2 -1) a^{n-4}}$ and $G(b) := \frac{(b-t)^{n-1}}{(a^2 + 2 b^2 -1) b^{n-4}}$. Then $f(b) = \frac{F(b)-G(b)}{a^2-b^2}$ and
$$f'(b) = \frac {2b}{(a^2-b^2)^2} (F(b)-G(b)) + \frac 1 {a^2-b^2} ( F(b) (\ln F)'(b) -G(b) (\ln G)'(b) ) \ , $$
$$(a^2-b^2)^2 f'(b) = F(b) (2 b + (a^2-b^2) (\ln F)'(b)) - G(b) (2 b + (a^2-b^2) (\ln G)'(b)) \ . $$
Now $2 b + (a^2-b^2)(\ln F)'(b) = 2 b \frac{a^2 + 2 b^2 - 1}{2 a^2 + b^2 -1}$ and $(\ln G)'(b) = \frac{n-1}{b-t} - \frac{n-4} b - \frac{4 b}{a^2 + 2 b^2 - 1}$.
Therefore
$$\frac{(a^2-b^2)^2 f'(b)}{2 b G(b)} = ( \frac{a-t}{b-t} )^{n-1} ( \frac b a )^{n-4} ( \frac{a^2+ 2 b^2 -1}{2 a^2 + b^2 -1} )^2 - (1 + \frac{a^2-b^2}{2 b} (\ln G)'(b) ) \ . $$
Hence we have to show that
\begin{align*}
L & := \Big(\frac{a-t}{b-t} \Big)^{n-1} \Big( \frac b a \Big)^{n-4} \\
& \ge R :=\Big(\frac{2 a^2 + b^2 -1}{a^2+2 b^2 -1} \Big)^2 \Big(1+\frac{a^2-b^2}{2 b} (\frac{n-1}{b-t} - \frac{n-4} b - \frac{4 b}{a^2 + 2 b^2 - 1}) \Big) \ .
\end{align*}
We first estimate the left side $L$ from below using
$$ \frac{b (a-t)}{a(b-t)} = 1 + \frac t a \frac{a-b}{b-t} \ , \ \frac a b = 1 + \frac{a-b} b \ , \ (\frac a b)^2 \ge 1 + 2 \frac{a-b} b \ :$$
\begin{align*}
L &=  (1 + \frac t a \frac{a-b}{b-t})^{n-1} (\frac a b) (\frac a b)^2  \\
 & \ge (1 + (n-1) \frac t b \frac{a-b}{b-t} + \frac{a-b} b)(1 + 2 \frac{a-b} b) + \frac{(n-1)(n-2)} 2 (\frac t b)^2 (1+ \frac{a-b} b) (\frac{a-b}{b-t})^2 \\
 & + \frac{(n-1)(n-2)(n-3)} 6 (\frac t b)^3 (\frac{a-b}{b-t})^3  \\
& = 1 + L_1 (a-b) + L_2 (a-b)^2 + L_3 (a-b)^3 \ ,
\end{align*}
$L_1 = (n-1) \frac t b \frac 1 {b-t} + \frac 3 b$, $L_2 = 2 (n-1) \frac t {b^2} \frac 1 {b-t} + \frac 2 {b^2} + \frac{(n-1)(n-2)} 2 (\frac t b)^2 \frac 1 {(b-t)^2}$, \\
$L_3 = \frac{(n-1)(n-2)} 2 (\frac{t^2}{b^3}) \frac 1 {(b-t)^2} + \frac{(n-1)(n-2)(n-3)} 6 (\frac t b)^3 \frac 1 {(b-t)^3} $.
To estimate the right side $R$ from above, we note that
\begin{equation}\label{eq3.4}
(\frac{2 a^2 + b^2 - 1}{a^2 + 2 b^2 -1})^2 \le 1 + \frac {4b}{3 b^2 -1} (a-b) \ ,
\end{equation}
since
\begin{align*}
& 1+ \frac{4 b}{3 b^2 -1} (a-b) - (\frac{2 a^2 + b^2 - 1}{a^2 + 2 b^2 -1})^2 = \frac{(a-b)^2}{(3 b^2 -1)(a^2+2 b^2 -1)^2} \times \\
 & \quad \times [2(1-b^2)(3 b^2-1)+4 b (1+b^2)(a-b)+(7 b^2+3)(a-b)^2+4 b (a-b)^3] \ ,
\end{align*}
which is positive. Further,
$$\frac{a^2-b^2}{2 b} = a-b + \frac{(a-b)^2}{2 b} \ , \ \frac{n-1}{b-t} - \frac{n-4} b = (n-1)\frac t b \frac 1 {b-t} + \frac 3 b \ , $$
$$\frac{a^2-b^2}{2 b} \frac{4 b}{a^2+ 2 b^2 -1} = \frac{2 (a^2-b^2)}{a^2+2 b^2 -1} \ge \frac{4 b}{3 b^2 -1} (a-b) - \frac 3 {(3 b^2 -1)^2} (a-b)^2 \ , $$
since in the last inequality the difference of the left and right side equals
$$\frac{(a-b)^2}{(a^2+2 b^2 -1)(3 b^2-1)} [(3 b^2 -1)(1-2 b^2) + 2 b (5 - 6 b^2)(a-b) + 3 (a-b)^2 ] \ge 0 \ . $$
This implies that
\begin{align*}
R & \le (1 + \frac{4b}{3 b^2 -1}(a-b)) \Big( 1 + (n-1) \frac t {b(b-t)} (a-b) + \frac 3 b (a-b) \\
& + (n-1) \frac t {2 b^2 (b-t)} (a-b)^2 +\frac 3 {2 b^2} (a-b)^2 -\frac {4 b}{3 b^2 -1} (a-b) + \frac 3 {(3 b^2 -1 )^2} (a-b)^2 \Big) \\
& = 1 + R_1 (a-b) + R_2 (a-b)^2 + R_3 (a-b)^3 \ ,
\end{align*}
$R_1 = (n-1) \frac t b \frac 1 {b-t} + \frac 3 b$, $R_2 = (n-1) \frac t {2 b^2} \frac 1 {b-t} + \frac {12}{3 b^2 -1} + \frac 3 {2 b^2} + (n-1) \frac{4 t}{(b-t)(3 b^2 -1)} - \frac{16b^2-3}{(3 b^2 -1)^2}$, \\
$R_3 = (n-1) \frac{2t}{b (b-t) (3 b^2 -1)} + \frac 6 {b (3 b^2 -1)} + \frac{12 b}{(3 b^2 -1)^3}$.  Since
$$\frac 1 {3 b^2 -1} = \frac 1 {3(b^2-t^2) + (3 t^2 -1)} \le \frac 1 {3 (b-t) (b+t)} \le \frac 1 {2 {\sqrt 3} (b-t)} $$
and $2 \sqrt 3 \le (n-1) \frac t {b^2}$ for $n \ge 4$,
$$R_2 \le \frac 3 2 (n-1) \frac t {b^2} \frac 1 {b-t} + \frac 3 {2 b^2} + \frac{2 (n-1) t}{{\sqrt 3}} \frac 1 {(b-t)^2} \ , $$
$$R_3 \le \frac{(n-1)t + 3 (b-t)}{{\sqrt 3} b (b-t)^2} + \frac b {2 {\sqrt 3} (b-t)^3} \le \frac{n t}{{\sqrt 3} b (b-t)^2} + \frac b {2 {\sqrt 3} (b-t)^3} \ .$$
Using $\frac 1 {{\sqrt 3}} \le t \le b \le \frac 1 {{\sqrt 2}}$ and $n \ge 4$, simple estimates then show that $L_2 \ge R_2$ and $L_3 \ge R_3$. Moreover, $L_1 = R_1$. Hence $L \ge R$, which ends the proof of Proposition \ref{prop3.2}.
\end{proof}

\begin{corollary}\label{cor1}
Let $n \ge 2$, $\frac 1 {{\sqrt 3}} \le t \le b < a < 1$ with $a^2+b^2<1$. Then
$$(\frac{a-t}{b-t})  \ge ( \frac{2 a^2+ b^2 -1}{a^2 + 2 b^2 -1} )^2 \ . $$
\end{corollary}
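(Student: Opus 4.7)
The strategy is to bootstrap the corollary off inequality \eqref{eq3.4}, which was derived explicitly inside the proof of Proposition \ref{prop3.2}. That inequality reads
\[
\left(\frac{2a^2+b^2-1}{a^2+2b^2-1}\right)^2 \le 1 + \frac{4b(a-b)}{3b^2-1},
\]
and is valid whenever $3b^2-1>0$. The borderline case $b = 1/\sqrt{3}$ together with $1/\sqrt{3} \le t \le b$ forces $t = b$, making the left-hand side of the corollary equal to $+\infty$ and the claim trivial, so I may freely assume $b > 1/\sqrt{3}$ in what follows.

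Since $\frac{a-t}{b-t} = 1 + \frac{a-b}{b-t}$, in view of \eqref{eq3.4} it suffices to establish the simpler intermediate inequality
\[
\frac{a-b}{b-t} \ge \frac{4b(a-b)}{3b^2-1}.
\]
The case $a = b$ trivializes both sides of the corollary; in the remaining case $a > b$ I cancel the positive factor $a-b$ and reduce to the one-variable inequality $3b^2 - 1 \ge 4b(b-t)$, i.e.\ $4bt - b^2 - 1 \ge 0$.

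This last inequality follows from $t \ge 1/\sqrt{3}$: I estimate
\[
4bt - b^2 - 1 \ge \frac{4b}{\sqrt{3}} - b^2 - 1 = -\left(b-\frac{1}{\sqrt{3}}\right)\left(b-\sqrt{3}\right),
\]
which is non-negative precisely on $[1/\sqrt{3},\sqrt{3}]$. The hypothesis $a>b$ with $a^2+b^2<1$ forces $2b^2<1$, i.e.\ $b < 1/\sqrt{2}$, while $b \ge t \ge 1/\sqrt{3}$, so $b$ lies in $[1/\sqrt{3}, 1/\sqrt{2}] \subset [1/\sqrt{3},\sqrt{3}]$, completing the argument. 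No step is technically hard; the only subtlety is spotting that \eqref{eq3.4} is the right launching point and verifying that the constraint $a^2+b^2<1$ keeps $b$ inside the interval where the factorization $-(b-1/\sqrt{3})(b-\sqrt{3})$ is non-negative. Equality $4bt = b^2+1$ holds exactly at $b = t = 1/\sqrt{3}$, matching the tightness phenomenon already observed in Propositions \ref{prop3.1} and \ref{prop3.2}.
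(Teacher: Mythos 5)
Your proof is correct and follows essentially the same route as the paper: both start from inequality \eqref{eq3.4} and reduce the claim to showing $\frac{4b(a-b)}{3b^2-1} \le \frac{a-b}{b-t}$. The only variation is in the final algebraic step: you verify $3b^2-1 \ge 4b(b-t)$ directly via the factorization $\frac{4b}{\sqrt 3}-b^2-1 = -\bigl(b-\frac{1}{\sqrt 3}\bigr)\bigl(b-\sqrt 3\bigr)$, whereas the paper chains the two looser bounds $3b^2-1 \ge 2\sqrt 3\,(b-t)$ and $4b \le 2\sqrt 2$; your version is marginally sharper and makes the equality case at $b=t=1/\sqrt 3$ more transparent, but the underlying idea is the same.
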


\begin{proof}
By \eqref{eq3.4}, using $b \le \frac 1 {{\sqrt 2}}$ and $3 b^2 -1 \ge 2 {\sqrt 3} (b-t)$, we find
$$( \frac{2 a^2+ b^2 -1}{a^2 + 2 b^2 -1} )^2 \le 1 + \frac{4 b}{3 b^2 -1} (a-b) \le 1 + \frac{2 {\sqrt 2} (a-b)}{2 {{\sqrt 3}} (b-t)} \le 1 + \frac{a-b}{b-t} = \frac{a-t}{b-t} \ . $$
\end{proof}

\section{Proof of Theorem 1}

\begin{proof}

Liu and Tkocz \cite{LT} proved Theorem 1 when $\frac 1 {{\sqrt 2}} \le t < 1$. We will therefore assume that $\frac 1 {{\sqrt 3}} \le t < \frac 1 {{\sqrt 2}}$. Let $1 > a_1 > a_2 > a_3 > \cdots > a_n >0$. Then $a_i >t$ can be satisfied only for $i =1$ or $2$, since $a_j^2 \le 1 - a_1^2 -a_2^2 < 1 - 2 t^2 < \frac 1 3 < t^2$ for $j \ge 3$. Also, we assume that $n \ge 4$. There are two cases to be studied, first $a_1 > t \ge a_2$ and secondly $a_1 > a_2 > t$. \\

\vspace{0,2cm}

(i) We first consider the case that $a_1 > t \ge a_2$. Then by Proposition \ref{prop1}
$$A_1(a,t) = \frac {2^{n-1}}{(n-1)!}  \frac {a_1^{n-2} (a_1-t)^{n-1}} {\prod_{i=2}^n (a_1^2-a_i^2)} =: \frac {2^{n-1}}{(n-1)!} F(a,t) \ . $$
By continuity, this formula also holds if some or all of the $a_i$ coincide or are zero, for $i \ge 2$. Fix $1 > a_1 >t$ and $n$ and consider $F( \cdot ,t)$ as a function of $(a_2, \cdots , a_n)$ only. Let $c_j := a_j^2$ for $j = 2 , \cdots , n$. Then
$$F(a,t) = \frac {a_1^{n-2} (a_1-t)^{n-1}}{G(c_j)} \ , \ G(c_j) := \prod_{j=2}^n (a_1^2-c_j) \ , \ \sum_{j=2}^n c_j = 1 - a_1^2 \ .$$
Let $\Omega := \{ (c_2, \cdots , c_n) \ | \ 0 \le c_j \le t^2 , \sum_{j=2}^n c_j = 1 - a_1^2 \ \}$. The extrema of $F$ and $G$ on $\Omega$ are either in critical points in the interior or on the boundary. We first look for critical points of $\ln G$ in the interior of $\Omega$. Using $c_n = 1-a_1^2-\sum_{j=2}^{n-1} c_j$, we have $\ln G(c_j) = \sum_{j=2}^{n-1} \ln(a_1^2-c_j) + \ln (\sum_{j=2}^{n-1} c_j + 2a_1^2 -1)$ and
$$\frac{\partial}{\partial c_l} (\ln G)(c_j) = \frac{-1}{a_1^2-c_l} + \frac 1 {\sum_{j=2}^{n-1} c_j + 2a_1^2 -1} = 0 \ , \ l=2, \cdots , n-1 \ ,$$
which implies $c_2 = \cdots = c_{n-1}$ and $a_1^2-c_2 = (n-2) c_2 + 2 a_1^2 -1$, $c_2 = c_n = \frac{1-a_1^2}{n-1}$: all $c_j$ are equal. \\
If the maximum of $F$ is attained on the boundary of $\Omega$, either some $c_j$'s are zero or some $c_j$ is equal to $t^2$. \\

(a) We first consider the case of some $c_j$'s being zero. If there are $p$ zero coordinates $c_j=0$ in a maximum point of $F$, we have $s=n-1-p$ non-zero $c_j$'s, say $c_2 , \cdots , c_{s+1}$ with $\sum_{j=2}^{s+1} c_j = 1 - a_1^2$. Repeating the above argument in this situation, we find that these $c_j$ are all equal to $c_2 = \frac{1-a_1^2}{s}$. This means with $n-2-2p = 2s - n$ that
$$F(a,t) = \frac{(a_1-t)^{n-1}}{a_1^n} (\frac{a_1^2}{a_1^2- \frac{1-a_1^2} s} )^s \ . $$
If $a_1 \le \frac 1 {{\sqrt 2}}$, $c_2=a_2^2 = (1-a_1^2)/s < a_1^2$, which requires $s \ge 2$. If $a_1 > \frac 1 {{\sqrt 2}}$, $s=1$ is possible. The function
$g(s) := (\frac{a_1^2}{a_1^2 - c/s})^s$ is decreasing in $s$, since \\
$g'(s) = g(s) (\ln(1+ \frac{c/s}{a_1^2-c/s}) - \frac{c/s}{a_1^2-c/s}) \le 0 \ , $
using $\ln(1+x) \le x$. Thus $F$ is maximal for $s=1$ or $s=2$, depending on whether $a_1 > \frac 1 {{\sqrt 2}}$ or $a_1 \le \frac 1 {{\sqrt 2}}$. First, let
$\frac 1 {{\sqrt 3}} \le t < a_1 \le \frac 1 {{\sqrt 2}}$, $s=2$ and consider
$h(a_1) := \frac{(a_1-t)^{n-1}}{a_1^{n-4}} \frac 1 {(3a_1^2-1)^2}$, now as a function of $a_1$. Then
$$(\ln h)'(a_1) = \frac 1 {a_1 (a_1-t) (3 a_1^2-1)} (t [n(3a_1^2-1)+4] - 3 a_1 (1+a_1^2)) \ . $$
This is positive, since
\begin{align*}
t [n(3a_1^2-1)+4] & - 3 a_1 (1-a_1^2) = [n(3a_1^2-1)+4] (t- \frac 1 {{\sqrt 3}}) \\
& \quad + 2(n-3)(a_1 - \frac 1 {{\sqrt 3}}) + \sqrt 3 (n-3) (a_1 - \frac 1 {{\sqrt 3}})^2 - 3 (a_1 - \frac 1 {{\sqrt 3}})^3
\end{align*}
is positive for $n \ge 4$ and $1 \ge a_1 > t \ge \frac 1 {{\sqrt 3}}$. Hence $h$ is strictly increasing, $h(a_1) < h(1)$ for $a_1 < 1$. Thus $F(a,t) \le (1-t)^{n-1}$, $A_1(a,t) \le A_1(a^{(1)},t)$, with equality only for $a_1 = 1$, i.e. for $a = a^{(1)}$. \\

If $a_1 > \frac 1 {{\sqrt 2}}$ and $s=1$, consider $k(a_1) := \frac{(a_1-t)^{n-1}}{a_1^{n-2}} \frac 1 {(2a_1^2-1)}$ as a function of $a_1$. In this case
$$(\ln k)'(a_1) = \frac 1 {a_1 (a_1-t) (2 a_1^2-1)} (t [n(2 a_1^2-1) + 2] -a_1 (1+2 a_1^2)) \ .$$
Here $\phi(a_1) := t [n(2 a_1^2-1) + 2] -a_1 (1+2 a_1^2)$ is negative at $a_1 = \frac 1 {{\sqrt 2}}$, since $\frac 1 {{\sqrt 3}} \le t < \frac 1 {{\sqrt 2}}$ and positive at $a_1=1$. We find that
\begin{align*}
\phi'(a_1) & = 4 n t a_1 -1 - 6 a_1^2 = (4 n t - 6 {\sqrt 2}) a_1 +2 - 6 (a_1-\frac 1 {{\sqrt 2}})^2 \\
& \ge (\frac {16 {\sqrt 3}} 3 - 6 {\sqrt 2}) a_1 + 2 - \frac 2 3 > 0 \ ,
\end{align*}
with $a_1 - \frac 1 {{\sqrt 2}} < \frac 1 3$, $6 (a_1-\frac 1 {{\sqrt 2}})^2 < \frac 2 3$. Hence $\phi$ is strictly increasing which means that $k$ is first decreasing near $a_1 = \frac 1 {{\sqrt 2}}$ and then increasing. Thus the maximum of $k$ is attained either for $a_1 \searrow \frac 1 {{\sqrt 2}}$ or for $a_1 =1$. Since $a_2^2 = 1- a_1^2 < t^2$,  we have $\frac{a_1-t}{2 a_1^2-1} \le \frac{a_1 - \sqrt{1-a_1^2}}{2 a_1^2 -1} = \frac 1 {a_1 + \sqrt{1-a_1^2}} \le 1$ and
$$\lim_{a_1 \searrow 1/{\sqrt 2}} k(a_1) \le \lim_{a_1 \searrow 1/{\sqrt 2}} (1- \frac t {a_1})^{n-2} = (1- {\sqrt 2} \ t)^{n-2} < (1-t)^{n-1} \ .$$
The last inequality holds since $(\frac{1-\sqrt{2/3}}{1-\sqrt{1/3}})^{n-2} < \frac 1 4 < 1 - t$ using $\frac 1 {{\sqrt 3}} \le t < \frac 1 {{\sqrt 2}}$, $n \ge 4$. Hence also in this case $A_1(a,t) \le A_1(a^{(1)},t)$. For $a \ne a^{(1)}$, there is a strict inequality since $k'>0$ near $a_1 = 1$. \\

(b) Now consider the case that some $c_j$ is equal to $t^2$, e.g. $c_2 = t^2$. Since $a_1^2+t^2 > \frac 2 3$, all other $c_j$'s satisfy
$c_j \le 1-a_1^2-t^2 < \frac 1 3 < t^2$, $j = 3 , \cdots , n$ and $\sum_{j=3}^n c_j = 1 - a_1^2 - t^2$. Repeating the above argument for these $(c_j)_{j=3}^n$, we find that the maximum in this case is obtained for $c_3=1-a_1^2-t^2$, $c_4 = \cdots = c_n =0$. Again the maximum occurs when only one coordinate is non-zero, which follows from the fact that $(\frac{a_1^2}{a_1^2+t^2+ \frac{1-a_1^2-t^2} s})^s$ is decreasing in $s$, since its $s$-derivative satisfies with $B:=1-a_1^2-t^2$
$$\ln ( 1 + \frac{B/s-t^2}{a_1^2+t^2-B/s} ) - \frac{B/s}{a_1^2+t^2-B/s} \le - \frac{t^2}{a_1^2+t^2-B/s} < 0 \ . $$
We find for $F$, with $n-3$ of the $c_j$'s being zero,
$$F(a,t) = \frac{a_1-t}{a_1^2-t^2} \frac{a_1^{n-2} (a_1-t)^{n-2}}{(a_1^2-c_3) a_1^{2(n-3)}} = \frac 1 {a_1+t} (1-\frac t {a_1})^{n-2} \frac{a_1^2}{2a_1^2+t^2-1} =: l(a_1) \ . $$
Note that $a_1^2+t^2 \le 1$, so that $a_1 \le \sqrt{1-t^2} \le \sqrt{\frac 2 3}$. The function $l$ is increasing in $a_1$, since
$(\ln l)'(a_1) = (\frac 2 {a_1} - \frac 1 {a_1+t}) + (\frac{(n-2) t}{a_1 (a_1-t)} - \frac{4 a_1}{2a_1^2+t^2-1}) \ge 0 \ : $
the difference of the first two terms is obviously non-negative and the difference of the last two terms also: the latter is equivalent to
$m(t,a_1):=t^3+ (4 a_1^2-1) t - 2 a_1^3 \ge 0$. $m$ is increasing in $t$ and thus minimal for $t = \frac 1 {\sqrt 3}$,
$m(\frac 1 {\sqrt 3},a_1) = \frac 4 {\sqrt 3} a_1^2 - 2 a_1^3 - \frac 2 {3 \sqrt 3}$. This is $0$ in $a_1=\frac 1 {\sqrt 3}$, strictly increasing in
$(\frac 1 {\sqrt 3},\frac 4 {3 \sqrt 3})$ and strictly decreasing in $(\frac 4 {3 \sqrt 3},\sqrt{\frac 2 3})$ with $m(\frac 1 {\sqrt 3},\sqrt{\frac 2 3}) \simeq 0.066 >0$. Hence $l$ and $F$ are increasing in $a_1$. Therefore $F(a,t) \le l(\sqrt{1-t^2}) = \frac 1 {t+\sqrt{1-t^2}} (1- \frac t {\sqrt{1-t^2}})^{n-2}$. We claim that this is strictly less than $(1-t)^{n-1}$, i.e. $A(a,t) < A(a^{(1)},t)$ in this case. The claim is equivalent to
$\phi_n(t) := \frac 1 {t+\sqrt{1-t^2}} \frac{ (1- \frac t {\sqrt{1-t^2}})^{n-2} }{(1-t)^{n-1}} < 1$. It is easily seen that $\phi_n$ is decreasing in $t$ and thus maximal for $t = \frac 1 {\sqrt 3}$, $\phi_n(\frac 1 {\sqrt 3}) = \frac{\sqrt 3}{1+ \sqrt 2}\frac{ (1-\frac 1 {\sqrt 2})^{n-2} }{(1-t)^{n-1} }$. The latter is maximal for $n=4$, if $n \ge 4$, and then $\phi_4(\frac 1 {\sqrt 3}) = \frac{\sqrt 3}{1+ \sqrt 2}\frac{ (1-\frac 1 {\sqrt 2})^2 }{(1-t)^3} \simeq 0.8152 < 1$.

\vspace{0,5cm}

(ii) We now consider the case that $\frac 1 {\sqrt 3} < t < a_2 < a_1 < 1$. Then $a_i < \frac 1 {\sqrt 3} < t$ for $ i \ge 3$. By Proposition \ref{prop1}
\begin{align*}
A_1(a,t) &= \frac{2^{n-1}}{(n-1)!} \Big( \frac {a_1^{n-2} (a_1-t)^{n-1}}{(a_1^2-a_2^2) \prod_{i=3}^n (a_1^2-a_i^2)} - \frac {a_2^{n-2} (a_2-t)^{n-1}}{(a_1^2-a_2^2) \prod_{i=3}^n (a_2^2-a_i^2)} \Big) \\
& =: \frac{2^{n-1}}{(n-1)!} (F(a,t) - G(a,t)) \ .
\end{align*}
By continuity, this formula also holds if some or all of the $a_i$'s, $3 \le i \le n$ are equal or are zero. Fix $n, 1 > a_1 > a_2 > t$ and let $c_i := a_i^2$, $i \ge 3$. Then $\sum_{i=3}^n c_i = 1-a_1^2-a_2^2 =: B$. Consider $A_1( \cdot ,t)$ as a function on $\Omega := \{(c_3, \cdots , c_n) \ | \ \sum_{i=3}^n c_i = B \}$. Note that for $i \ge 3$, $c_i \le 1-a_1^2-a_2^2 < 1 - 2 t^2 \le \frac 1 3 \le t^2$.
\begin{align*}
F(a,t) - G(a,t) &= \frac 1 {a_1^2-a_2^2} \Big( \frac{a_1^{n-2} (a_1-t)^{n-1}}{\prod_{i=3}^n (a_1^2-c_i)} -  \frac{a_2^{n-2} (a_2-t)^{n-1}}{\prod_{i=3}^n (a_2^2-c_i)} \Big) \\
& =: \frac{\tilde{F}(c_i) - \tilde{G}(c_i)}{a_1^2-a_2^2} \ .
\end{align*}
To find the maximum of $A_1$, we first look for critical points $(c_i)_{i=3}^n$ of the Lagrange function
$$L(c_i,\lambda) = \tilde{F}(c_i) - \tilde{G}(c_i) + \lambda ( \sum_{i=3}^n c_i -1 +a_1^2+a_2^2)$$
in the interior of $\Omega$,
$$\frac{\partial L}{\partial c_l} = \frac{\tilde{F}(c_i)}{a_1^2-c_l} - \frac{\tilde{G}(c_i)}{a_2^2-c_l} + \lambda = 0 \ , \ l = 3 , \cdots , n \ . $$
This means that the $c_l$ are solutions of the quadratic equation in $x$
\begin{equation}\label{eq4.1}
\tilde{F}(c_i)(a_2^2 - x) - \tilde{G}(c_i)(a_1^2 - x) + \lambda (a_1^2 - x)(a_2^2 - x) = 0 \ .
\end{equation}
There are (at most) two possible different solutions $ x = \beta_1, \beta_2$ for the $c_i$'s; possibly one one, if e.g. $\beta_2$ does not satisfy
$0 \le \beta_2 < t^2$. If the maximum of $A_1$ is attained on the boundary of $\Omega$, $p$ of the $c_i$'s are zero. The above argument shows that among the $n-2-p$ non-zero $c_i$'s, $i \ge 3$ there are (at most) two different values. Suppose we have two different solutions $c_3$ of multiplicity $r$ and $c_n$ of multiplicity $s$, $r+s=n-2-p$. Then $r c_3 + s c_n = B$. Let $c :=r c_3$ and $d:=s c_n$. Then $c+d=B$ and we have using $n-2-2p = 2(r+s)-(n-2)$, also if $p=0$,
$$\tilde{F}(c,d) - \tilde{G}(c,d) = \frac{(a_1-t)^{n-1} (a_1^2)^{r+s}}{a_1^{n-2} (a_1^2-c/r)^r (a_1^2-d/s)^s} - \frac{(a_2-t)^{n-1} (a_2^2)^{r+s}}{a_2^{n-2} (a_2^2-c/r)^r (a_2^2-d/s)^s} \ . $$
Fixing $t, a_2, a_1, r$ and $s$ and using $d = B-c$, consider this as a function of $c$ only, $\phi(c) := \tilde{F}(c,B-c) - \tilde{G}(c,B-c)$. Then
\begin{align}\label{eq4.2}
\phi'(c) &= \Big( \frac{\tilde{F}(c,B-c)}{(a_1^2-c/r) (a_1^2-(B-c)/s)} - \frac{\tilde{G}(c,B-c)}{(a_2^2-c/r) (a_2^2-(B-c)/s)} \Big) (\frac c r - \frac{B-c} s) \nonumber \\
& =: \psi(c) (\frac c r - \frac{B-c} s) \ .
\end{align}
We will prove that $\psi(c) >0$. Then $sgn(\phi'(c)) = sgn(\frac c r - \frac{B-c} s)$. This implies that $\phi$ is increasing if $c > B \frac r {r+s}$ and decreasing if $c < B \frac r {r+s}$, with $c \in [0,B]$. Hence the maximum of $\phi$ is attained either for $c=B$ or for $c=0$. In the first case, the maximum of $\phi$ as a function of $c$ is
\begin{equation}\label{eq4.3}
\phi(B) = \frac{(a_1-t)^{n-1}}{a_1^{n-2}} (\frac{a_1^2}{a_1^2-B/r})^r - \frac{(a_2-t)^{n-1}}{a_2^{n-2}} (\frac{a_2^2}{a_2^2-B/r})^r \ .
\end{equation}
In the second case, $r$ and $s$ are exchanged. Clearly, \eqref{eq4.3} is the case of only one relevant solution $\beta_1$ of \eqref{eq4.1}. \\

The claim $\psi(c) >0$ is equivalent to
\begin{equation}\label{eq4.4}
(\frac{a_1-t}{a_2-t})^{n-1} (\frac{a_2}{a_1})^{n-2} (\frac{a_1}{a_2})^{2(r+s)} > (\frac{a_1^2-c/r}{a_2^2-c/r})^{r+1} (\frac{a_1^2-(B-c)/s}{a_2^2-(B-c)/s})^{s+1} =: g(c) \ .
\end{equation}
Assume without loss of generality that $r \ge s$. We then prove that $g$ is decreasing or first decreasing and then increasing: Calculation shows, substituting $B = 1-a_1^2-a_2^2$,
$$(\ln g)'(c) = (a_1^2-a_2^2)/[(a_1^2 r-c) (a_1^2 s- (B-c)) (a_2^2 r - c) (a_2^2 s - (B-c))] \times M(c) \ , $$
$$M(c) = b_0 +b_1 c + b_2 c^2 \ , \ b_2= (r-s)(r+s+1) \ , $$
$$b_1 = r [(a_1^2+a_2^2) s (r+s+2) -2 (r+1) (1-a_1^2-a_2^2)] \ , $$
$$b_0 = r [ (r+1)(s+1)(a_1^2+a_2^2)^2 + (r+1) - s (r-s) a_1^2 a_2^2 -(r+1)(s+2)(a_1^2+a_2^2)] \ .$$
We find $M'(c) = 2 (r-s) (r+s+1) c + b_1 > 0$ since with $a_1 > a_2 > t \ge \frac 1 {{\sqrt 3}}$, $s \ge 1$
\begin{align*}
b_1/r &\ge (a_1^2+a_2^2)(r+3) - 2 (r+1) (1-a_1^2-a_2^2) = (3 r +5) (a_1^2+a_2^2) - 2 (r+1) \\
& \ge \frac 2 3 (3 r +5) - 2 (r+1) = \frac 4 3 > 0 \ .
\end{align*}
Therefore $M$ is strictly increasing in $c$. However, $b_0 < 0$: With $\frac 2 3 < x := a_1^2+a_2^2 < 1$ we have $b_0/r \le (r+1)(s+1)x^2-(r+1)(s+2)x+(r+1) \le 0$, since
$y^2 - \frac{s+2}{s+1} y +\frac 1 {s+1} = 0$ has the solutions $y=1$ and $y=\frac 1 {s+1}$ and $\frac 1 {s+1} < x < 1$. Hence $M(0) < 0$ and $M$ is increasing. Thus either $M$ stays negative or changes sign just once from negative to positive. This means that $\ln g$ is decreasing or first decreasing and then increasing in $[0,B]$. Therefore $g$ attains its maximum either at $0$ or at $B$,
$$g(c) \le \max \Big( (\frac{a_1}{a_2})^{2(r+1)} (\frac{a_1^2-B/s}{a_2^2-B/s})^{s+1} ,  (\frac{a_1}{a_2})^{2(s+1)} (\frac{a_1^2-B/r}{a_2^2-B/r})^{r+1} \Big) \ . $$
In the case that the maximum is attained at the second term, \eqref{eq4.4} requires
\begin{equation}\label{eq4.5}
(\frac{a_1-t}{a_2-t})^{n-1} (\frac {a_2}{a_1})^{n-2} > (\frac{a_2^2}{a_1^2})^{r-1} (\frac{a_1^2-B/r}{a_2^2-B/r})^{r+1} =: h(r) \ .
\end{equation}
If the maximum occurs at the first term, $r$ and $s$ are exchanged. By the Remark following the proof of Proposition \ref{prop3.1}, $h$ is decreasing in $r$, so the maximum on the right side of \eqref{eq4.5} is attained for $r=1$ and we have to verify \eqref{eq4.5} for $r=1$. Since $\frac{a_2 (a_1-t)}{a_1 (a_2-t)} \ge 1$, we get by Corollary \ref{cor1}
$$(\frac{a_1-t}{a_2-t})^{n-1} (\frac{a_2}{a_1})^{n-2} \ge \frac{a_1-t}{a_2-t} \ge (\frac{2 a_1^2 +a_2^2-1}{a_1^2+ 2 a_2^2 -1})^2 = (\frac{a_1^2-B}{a_2^2-B})^2 = h(1) \ , $$
i.e. \eqref{eq4.5} and \eqref{eq4.4} hold. Hence the maximum of $\phi$ is given by \eqref{eq4.3}. By Proposition \ref{prop3.1}, the function in \eqref{eq4.3} is decreasing in $r$, thus has its maximum for $r=1$, leading to
\begin{equation}\label{eq4.6}
A_1(a,t) \le \frac{2^{n-1}}{(n-1)!} \frac 1 {a_1^2-a_2^2} \Big( \frac{(a_1-t)^{n-1}}{a_1^{n-4}} \frac 1 {2 a_1^2 + a_2^2 -1} - \frac{(a_2-t)^{n-1}}{a_2^{n-4}} \frac 1 {a_1^2 + 2 a_2^2 -1} \Big) \ ,
\end{equation}
using that $B = 1-a_1^2-a_2^2$. Fixing $a_1$, we may increase $a_2$ as long as $a_2 \le \min(a_1 , \sqrt{1-a_1^2})$, so that $B \ge 0$. By Proposition \ref{prop3.2} the function in \eqref{eq4.6} is increasing in $a_2$, reducing $B>0$. The maximum of \eqref{eq4.6} with fixed $a_1$ is therefore obtained for $a_2 \to a_1$ if $a_1 \le \frac 1 {{\sqrt 2}}$ or for $a_2 = \sqrt{1-a_1^2}$ if $a_1 > \frac 1{{\sqrt 2}}$. \\

\vspace{0,5cm}

(iii) Consider first the case that $a_1 \le \frac 1 {{\sqrt 2}}$ in \eqref{eq4.6}. L'Hospital's rule yields that
\begin{align*}
H(a_1,t) & := \lim_{a_2 \to a_1} \frac 1 {a_1^2-a_2^2} \Big( \frac{(a_1-t)^{n-1}}{a_1^{n-4}} \frac 1 {2 a_1^2 + a_2^2 -1} - \frac{(a_2-t)^{n-1}}{a_2^{n-4}} \frac 1 {a_1^2 + 2 a_2^2 -1} \Big) \\
& = \frac 1 2 (1- \frac t {a_1})^{n-2} [ \frac{a_1 (7 a_1^2-3)  + 2 a_1^2 t}{(3 a_1^2 -1 )^2} + \frac {(n-4) t}{3 a_1^2-1} ] \\
& = \frac 1 6 (1- \frac t {a_1})^{n-4} (\frac{a_1-t}{a_1- \frac 1 {{\sqrt 3}}})^2  [ \frac{7 a_1^2-3+2a_1 t}{a_1 ({\sqrt 3} a_1 +1)^2} + (n-4) \frac{({\sqrt 3} a_1 -1) t}{a_1^2 ({\sqrt 3} a_1+1)} ] \ .
\end{align*}
The first two factors are clearly increasing in $a_1$, since $a_1 \ge t \ge \frac 1 {{\sqrt 3}}$ (or constant if $n=4$), and both terms in the brackets $[ \cdot ]$, too: calling the first $h_1(a_1,t)$ and the second $h_2(a_1,t)$, we have
\begin{align*}
\frac{\partial}{\partial a_1} h_1(a_1,t) & = \frac 1 {a_1^2 ({\sqrt 3} a_1+1)^3} [3 + 9 {\sqrt 3} a_1+7 a_1^2-4 {\sqrt 3} t a_1^2 - 7 {\sqrt 3} a_1^3] \\
& \ge \frac 1 {a_1^2 ({\sqrt 3} a_1+1)^3} ({\sqrt 3} a_1 +1) (3+6 {\sqrt 3} a_1 -11 a_1^2) \ ,
\end{align*}
where we used $t \le a_1$ and factored the result. This is positive for all $a_1 \in [0,1]$. For $h_2$ we have
$\frac{\partial}{\partial a_1} h_2(a_1,t) = \frac {2 (n-4) t} {a_1^3 ({\sqrt 3} a_1 +1)^2} (1+ {\sqrt 3} a_1 - 3 a_1^2)$ which also in positive in $[0, \frac 1 {{\sqrt 2}}]$. Hence $H(a_1,t)$ is increasing in $a_1 \in [\frac 1 {{\sqrt 3}}, \frac 1 {{\sqrt 2}}]$ for all $t$ in this range, with $a_1 \ge t$. Hence for $a_1 \le \frac 1 {{\sqrt 2}}$
$$A_1(a,t) \le \frac{2^{n-1}}{(n-1)!} H(\frac 1 {{\sqrt 2}},t) = \frac{2^{n-1}}{(n-1)!} \frac 1 {{\sqrt 2}} (1+{\sqrt 2} (n-2) t)(1- {\sqrt2} t)^{n-2} = A_1(a^{(2)},t) \ .$$
For $n \ge 4$ and $\frac 1 {{\sqrt 3}} \le t \le \frac 1 {{\sqrt 2}}$, $A_1(a^{(2)},t) < A_1(a^{(1)},t)$, i.e.
\begin{equation}\label{eq4.7}
\frac 1 {{\sqrt 2}} (1+{\sqrt 2} (n-2) t)(1- {\sqrt2} t)^{n-2} < (1-t)^{n-1} \ :
\end{equation}
Taking the logarithmic derivative of the quotient of the left and the right side in $t$, it is easily seen that the worst case for \eqref{eq4.7} to hold occurs for $t = \frac 1 {{\sqrt 3}}$, requiring $(\frac 1 {{\sqrt 2}} + \frac{n-2}{{\sqrt 3}})(1- \sqrt{\frac 2 3})^{n-2} < (1 - \frac 1 {{\sqrt 3}})^{n-1}$,
which is true for all $n \ge 4$, but not for $n=3$. For $n=3$, $A_1(a^{(2)},t) < A_1(a^{(1)},t)$ only holds if $t \ge 0.6384$, cf. Proposition \ref{prop2}. \\

\vspace{0,5cm}

(iv) In the second case, $a_1 > \frac 1 {{\sqrt 2}}$, $a_2 \le \sqrt{1-a_1^2}$, the maximum in \eqref{eq4.6} is attained for $a_2 = \sqrt{1-a_1^2}$, with
$2 a_1^2 + a_2^2 -1 = a_1^2$, $a_1^2+ 2 a_2^2 -1 = a_2^2$,
\begin{equation}\label{eq4.8}
A_1(a,t) \le \frac{2^{n-1}}{(n-1)!} f(a_1,t) \ , \ f(a_1,t) := \frac 1 {2 a_1^2 -1} \Big( \frac{(a_1-t)^{n-1}}{a_1^{n-2}} - \frac{(a_2-t)^{n-1}}{a_2^{n-2}} \Big) \ .
\end{equation}
We claim that in this case, too, $A_1(a,t) < A_1(a^{(1)},t)$, i.e. $f(a_1,t) < (1-t)^{n-1}$ if $a_1 < 1$. \\

To prove this, first consider the case that $n \ge 5$ and $a_1 \ge \frac{18}{25} = 0.72$. Since the second term for $f$ in \eqref{eq4.8} is negative, $f(a_1,t) \le \frac{a_1}{2 a_1^2 -1} ( 1 - \frac t {a_1})^{n-1} =: g(a_1,t)$. We will show that $g(a_1,t) < (1-t)^{n-1}$, i.e.
\begin{equation}\label{eq4.9}
h(a_1,t) := \frac{a_1}{2 a_1^2 -1} (\frac{1-\frac t {a_1}}{1-t})^{n-1} < 1.
\end{equation}
As a function of $t$, $h$ is decreasing since $\frac d {dt} (\frac{1-\frac t {a_1}}{1-t}) = - \frac{1-a_1}{a_1 (1-t)^2} < 0 $. Therefore $n=5$ and $t = \frac 1 {{\sqrt 3}}$ are the worst cases for \eqref{eq4.9} to hold, and we have to show
$$\phi(a_1) := \frac{a_1}{2 a_1^2 -1} ( \frac{1- \frac 1 {{\sqrt 3} a_1} }{1-\frac 1 {{\sqrt 3}} } )^4 =: \frac{k(a_1)}{(1-\frac 1 {{\sqrt 3}} )^4 } < 1 \ . $$
But $(\ln k)'(a_1) = - \frac{1+2 a_1^2}{a_1 (2 a_1^2 -1)} + \frac 4 {a_1 ( {\sqrt 3} a_1-1 )} < 0$, if
$4(a_1^2-1) < (1+2 a_1^2)({\sqrt 3} a_1 -1)$, i.e. $a_1 < a_0 \simeq 0.7682$. For $a_1 > a_0$, $(\ln k)'(a_1) > 0$. Thus $\ln k$ is decreasing in $[0.72,a_0]$ and increasing in $[a_0,\sqrt{\frac 2 3}]$. Note that $a_1 = \sqrt{1-a_2^2} \le \sqrt{1-t^2} \le \sqrt{\frac 2 3}$. Hence for all $a_1 \in [0.72,\sqrt{\frac 2 3}]$
$$\phi(a_1) \le \max ( \phi(0.72), \phi(\sqrt{\frac 2 3}) ) = \max (0.945 , 0.565) < 1 \ . $$

Consider next the case $n \ge 5$ and $\frac 1 {{\sqrt 2}} < a_1 <0.72$. Then $\frac 1 {a_1+a_2} < 0.71$, $a_2 = \sqrt{1-a_1^2} \ge 0.69$ and
$$f(a_1,t) = \frac 1 {a_1+a_2} (\frac{(a_1-t)^{n-1}}{a_1^{n-2}} - \frac{(a_2-t)^{n-1}}{a_2^{n-2}}) / (a_1-a_2) \le 0.71 \ g'(\theta) \ , $$
$g(x) := \frac{(x-t)^{n-1}}{x^{n-2}}$, $t < a_2 < x < a_1 \le 0.72$, for some $\theta \in [0.69,0.72]$. But with $t \le x$,
$$g'(x) = \frac{(n-2) t + x}{x^{n-1}} (x-t)^{n-2} \le (n-1) (1-\frac t x)^{n-2} \ . $$
The claim $f(a_1,t) < (1-t)^{n-1}$ will follow from $\psi(t) := 0.71 (n-1) \frac{(1-t/x)^{n-2}}{(1-t)^{n-1}} < 1$. $\psi$ is decreasing in $t$, since $\ln(\psi)'(t) = - \frac{t+n-2-(n-1) x}{(1-t)(x-t)} < 0$. Thus we need
$$\frac{0.71}{1-\frac 1 {{\sqrt 3}}} (n-1) (\frac{1-\frac 1 {{\sqrt 3} x}}{1- \frac 1 {{\sqrt 3}}})^{n-2} \le 1.7 (n-1) (\frac{1-\frac 1 {{\sqrt 3} x}}{1- \frac 1 {{\sqrt 3}}})^{n-2} < 1 \ . $$
Now $1-\frac 1 {{\sqrt 3} x}$ is maximal for $x = 0.72$ and we require $1.7 (n-1) 0.47^{n-2} < 1$. Since $(m+1) q^m$ is decreasing in $m \ge 1$ if $0<q<1$, we need $1.7 \times 4 \times 0.47^3 < 1$ which is true. \\

\vspace{0,5cm}

(v) Last, consider the case $n=4$ and $a_1 > \frac 1 {\sqrt 2}$. Then we want to show with $a_2 = \sqrt{1-a_1^2}$ that
\begin{align}\label{eq4.10}
f(a_1,a_2,t) &= \frac 1 {a_1^2-a_2^2} (\frac{(a_1-t)^3}{a_1^2} - \frac{(a_2-t)^3}{a_2^2}) \nonumber \\
&= \frac{a_1^2a_2^2 - 3 a_1a_2 t^2 + (a_1+a_2) t^3}{a_1^2 a_2^2 (a_1+a_2)} < (1-t)^3  \ .
\end{align}
Let $\phi(t) := \frac{a_1^2a_2^2 - 3 a_1a_2 t^2 + (a_1+a_2) t^3}{(1-t)^3}$. Then
$$\phi'(t) = \frac 3 {(1-t)^4} [a_1^2 a_2^2 - 2 a_1 a_2 t + (a_1 + a_2 -a_1 a_2)t^2] =: \frac 3 {(1-t)^4} \psi(a_1,a_2,t) \ , $$
$\frac{\partial}{\partial a_1} \psi(a_1,\sqrt{1-a_1^2},t) = \frac 1 {a_2}[2(2a_1^2-1)(t-a_1 \sqrt{1-a_1^2})+ t^2 (2 a_1^2-a_1-1+\sqrt{1-a_1^2}]$.
This is positive since $t > \frac 1 2 >a_1 \sqrt{1-a_1^2}$ and $\gamma(a_1) := 2 a_1^2 - a_1 -1 +\sqrt{1-a_1^2}$ is non-negative in $[\frac 1 {{\sqrt 2}}, \sqrt{\frac 2 3}]$ in view of $\gamma(\frac 1 {{\sqrt 2}})=0$ and $\gamma'(a_1) = 4 a_1 - 1 - \frac {a_1}{\sqrt{1-a_1^2}} > 0$, using $4 > \frac 1 {a_1} + \frac 1 {\sqrt{1-a_1^2}}$: At $a_1 = \sqrt{\frac 2 3}$ we have $4 > \sqrt{\frac 3 2} + \sqrt 3$. Therefore $\frac{\partial \psi}{\partial a_1} > 0$ and thus $\psi$ is maximal at $a_1 = \sqrt{1-t^2}$. Note here that $t \le a_2 = \sqrt{1-a_1^2}$ implies $a_1 \le \sqrt{1-t^2}$. We have $\psi(\sqrt{1-t^2},t,t) = t^2 \lambda(t)$,
$\lambda(t) := (1+t)(1-\sqrt{1-t^2})-t^2$. Since $(1+t-t^2)^2 -(1+t)^2 (1-t^2) = t^2 (2 t^2-1) \le 0$, $\lambda(t) \le 0$ for all $t \in [0, \frac 1 {{\sqrt 2}}]$. Therefore $\phi' < 0$ and $\phi$ is decreasing in $t$, with maximum at $t = \frac 1 {{\sqrt 3}}$. Hence \eqref{eq4.10} means that we have to prove
$$M := \frac{a_1^2 a_2^2 - a_1 a_2 + (a_1 + a_2)/(3 {\sqrt 3})}{a_1^2 a_2^2 (a_1+a_2)} < (1- \frac 1{{\sqrt 3}})^3 = 2- \frac{10} 9 {\sqrt 3} \simeq 0.0755 \ . $$
Now, $a_1^2 (1-a_1^2)(a_1+\sqrt{1-a_1^2})$ is decreasing, its minimum at $\sqrt{\frac 2 3}$ is $\frac 2 9 \frac {{\sqrt 2} +1}{{\sqrt 3}} \simeq 0.3098 \ , $
$\delta(a_1) := a_1^2 (1-a_1^2) - a_1 \sqrt{1-a_1^2} + (a_1+\sqrt{1-a_1^2})/(3 {\sqrt 3})$ is decreasing, too, since
$\delta'(a_1) = - \frac 1 {9 a_1} [ {\sqrt 3} (a_1 - \sqrt{1-a_1^2}) - 9 (a_1+ \sqrt{1-a_1^2}) (a_1 - \sqrt{1-a_1^2})^3]$. This is negative, since $a_1+\sqrt{1-a_1^2} \le {\sqrt 2}$ and with $x:=a_1-\sqrt{1-a_1^2}$, ${\sqrt 3} x - 9 {\sqrt 2} x^3 >0$ if $x < \sqrt{{\sqrt 3}/(9 {\sqrt 2})} \simeq 0.37$. But $ x \le \frac{{\sqrt 2}-1}{{\sqrt 3}} \le 0.14$ satisfies this. Therefore $\delta$ is maximal for $a_1 = \frac 1 {{\sqrt 2}}$,
$\delta(\frac 1 {{\sqrt 2}}) = \frac{{\sqrt 2}}{3 {\sqrt 3}} - \frac 1 4 \le 0.0222$. Thus $M \le \frac{0.0222}{0.3098} < 0.072 < 0.0755$, proving \eqref{eq4.10}. We conclude that Theorem \ref{th1} is also valid for $n=4$. \\
Note that the maximum of $A_1( \cdot , t)$ is not attained in the second case $\frac 1 {{\sqrt 3}} \le t < a_2 < a_1$. Thus the only maximal hyperplanes in $B_1^n$ are the $(n-1)$-dimensional coordinate planes.
\end{proof}

{\it Remark.}  For $n \ge 5$, $f(a_1,t)$ is actually increasing for $a_1 \in [\frac 1 {{\sqrt 2}}, \sqrt{\frac 2 3}]$ and for all $t \in [\frac 1 {{\sqrt 3}},\frac 1 {{\sqrt 2}}]$ has its maximum at $a_1 = \sqrt{1-t^2}$ with $f(a_1,t) \le \frac{(\sqrt{1-t^2}-t)^{n-1}}{(1-2 t^2) \sqrt{1-t^2}^{n-2}}$. Thus for $n \ge 5$, $a^{(2)}$ is not a local extremum of $A(\cdot , t)$, $\frac 1 {\sqrt 3} \le t \le \frac 1 {\sqrt 2}$ and for $\tilde{a} = (\sqrt{1-t^2},t,0, \cdots ,0)$, $A_1(a^{(2)},t) < A_1(\tilde{a},t) < A_1(a^{(1)},t)$. For $n=4$, $f(a_1,t)$ is first decreasing in $a_1$, then increasing. For $t < 0.629$, its maximum occurs at $a = \frac 1 {{\sqrt 2}}$, for $t > 0.629$ at $\sqrt{1-t^2}$. For $n=3$, $f(a_1,t)$ is decreasing in $a_1$ for all $t \in [\frac 1 {{\sqrt 3}}, \frac 1 {{\sqrt 2}}]$. We do not prove these statements, since we do not need them.

\vspace{0,5cm}

\section{Conjectures}

We already mentioned that $a^{(n)}$ does not yield the maximal hyperplane at distance $t = \frac 1 {{\sqrt n}}$, if $n \ge 7$, different from the cases $n=2 ,3$. For $n \ge 7$, $A_1(a^{(1)},\frac 1 {{\sqrt n}}) > A_1(a^{(n)},\frac 1 {{\sqrt n}})$. For $n \ge 7$, we conjecture that there are $\gamma_n < \beta_n < \alpha_n$ such that $A_1( \cdot ,t)$ is maximal for
$$\left\{\begin{array}{c@{\quad}l}
a^{(n)}  \; \ , & \; \gamma_n \le t \le \beta_n \\
a^{(2)}  \; \ , & \; \beta_n \le t \le \alpha_n \\
a^{(1)}  \; \ , & \; \alpha_n \le t < 1
\end{array}\right\}. $$
The numbers $\alpha_n$ and $\beta_n$ are determined as the larger of the two solutions for $t$ of $A_1(a^{(1)},t) = A_1(a^{(2)},t)$ and $A_1(a^{(2)},t) = A_1(a^{(n)},t)$, respectively. These are of order $O(\frac 1 n)$, since e.g. $A_1(a^{(1)},\frac c {n+2}) = A_1(a^{(2)},\frac c {n+2}) + O(\frac 1 n)$ yields the equation $\exp(({\sqrt 2}-1)c) = \frac 1 {{\sqrt 2}} +c$, which has as its larger solution $c \simeq 3.4256$ (and a smaller solution $c \simeq 0.5474$). We have that $\alpha_n \le \frac 1 {{\sqrt n}}$ for $n \ge 7$. Thus for these $n$, $a^{(1)}$ is conjectured to yield the absolute maximum for at least all $\frac 1 {{\sqrt n}} \le t < 1$. In K\"onig \cite{Ko} it was shown that $a^{(1)}$ yields at least a local maximum of $A_1(\cdot,t)$ for all $\frac 3 {n+2} < t < 1$ and a local minimum for $0 < t < \frac 3 {n+2}$. Thus we conjecture that for only slightly larger values of $t$ than $\frac 3 {n+2}$, this local maximum is a global maximum, if $n \ge 7$. We also think that for $n \ge 4$, there is no continuous curve of maximal vectors between $a^{(n)}$ and $a^{(2)}$, different from dimension $n=3$. The numbers $\beta_n$ are only slightly smaller than the values of $\alpha_n$, like $\frac c {n+2}$ with $c = 3 + \varepsilon$. For $t$ close to zero and $n = 2, 3$, the vector $a^{[t]} := (\sqrt{1-(n-1) t^2},t, \cdots,t) \in S^{n-1}$ yields the absolute maximum of $A_1(\cdot,t)$. We conjecture that there are small numbers $\delta_n > 0$ such that $a^{[t]}$ yields the maximal hyperplane for $0 < t < \delta_n$. For $\delta_n < t < \gamma_n$ there should be a continuous curve of maximal vectors joining $a^{[t]}$ to $a^{(n)}$. For $n=6$, basically the same should hold, but there will be a discontinuity in $A_1(a_{max}, \cdot)$ at $t= \frac 1 {{\sqrt 6}}$. For $n=4, 5$, in between $a^{(n)}$ and $a^{(2)}$ there may be $t$-intervals where $a^{(3)}$ ($n=4$) or $a^{(4)}$ and $a^{(3)}$ ($n=5$) yield the maximal hyperplane. \\

Since $A_1(a^{(n)},t)=0$ for $t > \frac 1 {{\sqrt n}}$, the absolute minimum is only of interest for $0 < t <\frac 1 {{\sqrt n}}$. We conjecture that there are numbers $0 < c_n < d_n \le \frac 1 {{\sqrt n}}$ such that the absolute minimum of $A_1(\cdot,t)$ is determined by
$$\left\{\begin{array}{c@{\quad}l}
a^{(n)}  \; \ , & \; \ 0 \le t \le c_n \\
a^{(1)}  \; \ , & \; c_n \le t < d_n
\end{array}\right\} \ , $$
which is true for $n=2, 3$. The numbers $c_n$ are the smaller of the two $t$-solutions of $A_1(a^{(n)},t) = A_1(a^{(1)},t)$. They should be of order $\frac b {n+2}$ with $b \in (0.6,0.7)$. For $n=4, 5$ probably $d_n = \frac 1 {{\sqrt n}}$, as it is for $n=2,3$. For $n \ge 8$, $d_n$ might be the solution of $A_1(a^{(1)},t) = A_1(a^{[t]},t)$ with $d_n = \frac{5/2} n +O(\frac 1 {n^2})$, and then $a^{[t]}$ might yield the minimum for $d_n \le t \le \frac 1 {{\sqrt n}}$. For $n=6, 7$, probably $a^{(n-1)}$ yields the minimum for $d_n \le t \le \frac 1 {{\sqrt n}}$. \\
Concerning the vector $a^{[t]}$, we mention that for the $t$-derivatives
\begin{align*}
A_1'(a^{[t]},0) &= A_1'(a^{(1)},0) = -(n-1) \frac {2^{n-1}}{(n-1)!} \ , \\
A_1''(a^{[t]},0) &= (n-1)(n+1) \frac {2^{n-1}}{(n-1)!} > A_1''(a^{(1)},0) = (n-1)(n-2) \frac {2^{n-1}}{(n-1)!}
\end{align*}
and
\begin{align*}
A_1(a^{[t]},\frac 1 {{\sqrt n}}) &= A_1(a^{(n)},\frac 1 {{\sqrt n}}) = \frac{{\sqrt n}}{2^{n-1}} < A_1(a^{(1)},\frac 1 {{\sqrt n}}) = (1- \frac 1 {{\sqrt n}})^{n-1} \ , n \ge 7 \ , \\
A_1'(a^{[t]},\frac 1 {{\sqrt n}}) &= A_1'(a^{(n)},\frac 1 {{\sqrt n}}) = -\frac{n(n-1)(n-2)}{2^n} \ .
\end{align*}
Therefore, for $n \ge 7$, $d_n < \frac 1 {\sqrt n}$ and there has to be a change from $a^{(1)}$ to a different minimum vector, and most likely it will not be $a^{(n)}$ since probably $A_1''(a^{[t]},\frac 1 {{\sqrt n}}) < A_1''(a^{(n)},\frac 1 {{\sqrt n}})$ : The latter is true at least for $n=8$. \\

\vspace{0,5cm}

\section{Sections of the polydisc}

In dimension $2$, the real $l_1$-ball $B_1^2$ and the real $l_\infty$-ball $B_\infty^2$ are isometric. For $n \ge 2$, $a \in S^{n-1}$ and $t \ge 0$, let
$$A_\infty(a,t) = \vol_{n-1} \{ \ x \in B_\infty^n \ | \ <x,a> =t \ \} \ . $$
Put $a^{[t]} := ( (\sqrt{2-t^2}+t)/2 , (\sqrt{2-t^2}-t)/2 ) \in S^1$. Then for $n=2$ we have the following extrema of $A_\infty(\cdot,t)$ : \\
$$ Maximum :  \left\{\begin{array}{c@{\;}l}
a^{[t]}  , &  \; \quad 0 \le t \le 1 \\
a^{(2)}  , &  \frac 3 4 {\sqrt 2} \le t < {\sqrt 2}
\end{array}\right\} \ , \
Minimum : \left\{\begin{array}{c@{\;}l}
a^{(1)}  , & \ \ 0 \le t \le {\sqrt 2} -1 \\
a^{(2)}  , & \ {\sqrt 2} -1 \le t < 1
\end{array}\right\} . $$
For $1 < t < \frac 3 4 {\sqrt 2}$, the maximal vector rotates from $a^{(1)}=a^{[1]}$ to $a^{(2)}$, with a discontinuity of $A_\infty(a_{max},\cdot)$  at $t=1$, cf. K\"onig, Koldobsky \cite{KK1}. If $a = (a_1,a_2) \in S^1$, this is a consequence of
\begin{equation}\label{eq6.1}
A_\infty(a,t) = \left\{\begin{array}{c@{\quad}l}
\quad \frac 2 {a_1}  \quad \quad , & \quad \quad \quad 0 \le t \le a_1 - a_2 \\
\frac{a_1+a_2-t}{a_1 a_2} \; \ , & \; a_1-a_2 \le t < a_1+a_2
\end{array}\right\}.
\end{equation}
We will now study the complex analogue of this. Let $\D := \{ z \in \C \ | \ |z| \le 1 \}$ be the unit disc in $\C$ and $B_\infty^n := \D^n$ be the $n$-dimensional polydisc (complex cube). Let $a \in \C^n$, $||a||_2=1$ and $t \in \C$. Define similarly as above
$$A_{\infty,\C}(a,t) := \vol_{2(n-1)} \{ z \in B_\infty^n \ | \ <a,z> =t \} \ ,$$
where $<\cdot,\cdot>$ is the scalar product in $\C^n$ and $\vol_{2(n-1)}$ is the real $2(n-1)$-dimensional Lebesgue measure, identifying $\C^n$ with $\R^{2n}$. Since the hyperplane section is invariant under coordinate-wise multiplications by $\exp(i \theta_j)$, we may assume that $t \ge 0$ and $a \in S^{n-1}$ with $1 \ge a_1 \ge a_2 \ge \cdots \ge a_n \ge 0$. The there is the following formula for the complex parallel section function of the polydisc:
\begin{equation}\label{eq6.2}
A_{\infty,\C}(a,t) = \pi^{n-1} \frac 1 2 \int_0^\infty \prod_{k=1}^n  j_1(a_k s) J_0(t s) \ s \ ds \ ,
\end{equation}
where $j_1(x) := 2 \frac{J_1(x)} x$ and $J_0$ and $J_1$ are the usual Bessel functions, cf. Oleszkiewicz, Pelczy\'nski \cite{OP} or K\"onig, Koldobsky \cite{KK2}. For $t=0$, Oleszkiewicz and Pelczy\'nski proved the complex analogue of Ball's theorem by showing that $a^{(2)}$ yields the maximal (central) complex hyperplane in the $n$-polydisc. We will now evaluate \eqref{eq6.2} for $n=2$ in the non-central case $t>0$ and prove the following complex analogue of \eqref{eq6.1}:

\begin{proposition}\label{prop6.1}
Let $1 \ge a_1 \ge a_2 \ge 0$, $a_1^2+a_2^2 = 1$, $0 \le t \le {\sqrt 2}$. Then for $n=2$:
\begin{equation}\label{eq6.3}
A_{\infty,\C}(a,t) =  \left\{\begin{array}{c@{\quad}l}
\quad \frac \pi {a_1^2}  \quad \quad \quad , & \quad \quad \quad 0 \le t \le a_1 - a_2 \\
\phi(a_1,a_2,t) \; \ , & \; a_1-a_2 \le t < a_1+a_2
\end{array}\right\},
\end{equation}
\begin{align*}
\phi(a_1,a_2,t) & =\frac{\pi}{2 a_1^2} + \frac 1 {2a_1^2a_2^2} [ \arcsin(\frac{1-t^2}{2a_1a_2}) \\
 & +(a_1^2-a_2^2) \arcsin(\frac{4a_1^2a_2^2-(1-t^2)}{2a_1a_2t^2})-\sqrt{4a_1^2a_2^2-(1-t^2)^2} ] \ .
\end{align*}
\end{proposition}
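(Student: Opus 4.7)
The plan is to compute $A_{\infty,\C}(a,t)$ as a $2$-dimensional volume by direct geometric slicing in $\C^2$, rather than trying to evaluate the Bessel-function integral \eqref{eq6.2} in closed form. The real $2$-plane $\Sigma_t := \{(z_1,z_2) \in \C^2 : a_1 z_1 + a_2 z_2 = t\}$ is parametrized by $z_1 \in \C$ with $z_2 = (t - a_1 z_1)/a_2$. Writing $z_j = x_j + i y_j$, the two tangent vectors $(1,0,-a_1/a_2,0)$ and $(0,1,0,-a_1/a_2)$ in $\R^4$ are orthogonal and both have length $1/a_2$ (using $a_1^2+a_2^2=1$), so the induced area element on $\Sigma_t$ is $\frac{1}{a_2^2}\,dA_{z_1}$. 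The constraint $|z_2|\le 1$ translates to $|z_1 - t/a_1| \le a_2/a_1$, hence
$$A_{\infty,\C}(a,t) = \frac{1}{a_2^2}\,\vol_2(D_1 \cap D_2),$$
where $D_1 := \D$ and $D_2 \subset \C$ is the disc with center $t/a_1$ and radius $r_2 := a_2/a_1$.

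This reduces the problem to the area of intersection of two planar discs with radii $r_1 = 1$, $r_2 = a_2/a_1$ and distance between centers $d = t/a_1$. If $t \le a_1 - a_2$, i.e. $d + r_2 \le r_1$, then $D_2 \subset D_1$ and $\vol_2(D_1 \cap D_2) = \pi r_2^2 = \pi a_2^2/a_1^2$, yielding the first case of \eqref{eq6.3}. For $a_1 - a_2 < t < a_1 + a_2$ the classical lens-area formula gives
\begin{align*}
\vol_2(D_1 \cap D_2) &= \arccos\Big(\tfrac{d^2+r_1^2-r_2^2}{2dr_1}\Big) + r_2^2 \arccos\Big(\tfrac{d^2+r_2^2-r_1^2}{2dr_2}\Big) \\
&\quad - \tfrac{1}{2}\sqrt{-(d^2-(r_1+r_2)^2)(d^2-(r_1-r_2)^2)}.
\end{align*}
Substituting the values and using $a_1^2+a_2^2=1$ turns the square-root factor into $\sqrt{4a_1^2a_2^2 - (1-t^2)^2}/a_1^2$, matching the square-root in \eqref{eq6.3}.

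It remains to convert the two $\arccos$ terms into the precise $\arcsin$ expressions in \eqref{eq6.3}. Writing $\arccos = \pi/2 - \arcsin$, the arguments become $x := (t^2+a_1^2-a_2^2)/(2ta_1)$ and $y := (t^2+a_2^2-a_1^2)/(2ta_2)$. Using the decomposition
$$2 a_1^2 \arcsin x + 2 a_2^2 \arcsin y = (\arcsin x + \arcsin y) + (a_1^2-a_2^2)(\arcsin x - \arcsin y),$$
one derives from $\sin(A\pm B) = \sin A \cos B \pm \cos A \sin B$ that
$$\arcsin x + \arcsin y = \frac{\pi}{2} - \arcsin\Big(\frac{1-t^2}{2a_1a_2}\Big), \qquad \frac{\pi}{2} - (\arcsin x - \arcsin y) = \arcsin\Big(\frac{4a_1^2 a_2^2 - (1-t^2)}{2 t^2 a_1 a_2}\Big);$$
the second of these reduces to the polynomial identity
$$4 t^4 a_1^2 a_2^2 - (a_1^2 - a_2^2)^2 \bigl(4 a_1^2 a_2^2 - (1-t^2)^2\bigr) = \bigl(4 a_1^2 a_2^2 - (1-t^2)\bigr)^2,$$
which is straightforward to check in the variables $u = 4a_1^2 a_2^2$ and $w = 1 - t^2$. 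Assembling the pieces yields the stated formula. The main technical hurdle is this last step: the $\arcsin$ form in the statement is not the one directly delivered by the lens formula, and extracting it requires both the sine-subtraction identity and the polynomial identity above; everything else is a bookkeeping substitution into a classical planar area formula.
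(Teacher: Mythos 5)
Your argument is correct and takes a genuinely different route from the paper's. You parametrize the complex line $\{a_1z_1+a_2z_2=t\}$ by $z_1$, compute that the induced area element is $a_2^{-2}\,dA_{z_1}$, and reduce the section volume to the area of the lens cut from the unit disc by the disc of radius $a_2/a_1$ centred at $t/a_1$; the classical lens-area formula plus your $\arcsin$ rearrangement then produce the stated $\phi$. The paper instead specializes the Bessel-integral representation \eqref{eq6.2}: it applies the addition formula for $j_1(a_1s)j_1(a_2s)$ (Watson 11.41), substitutes $u=\sqrt{a_1^2+a_2^2-2a_1a_2\cos\theta}$, collapses the $s$-integral using the discontinuity of $\int_0^\infty J_1(us)J_0(ts)\,ds$ (Watson 13.42), and finally evaluates $\int\sqrt{((a_1+a_2)^2-u^2)(u^2-(a_1-a_2)^2)}\,du/u$ by an explicit antiderivative. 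Your slicing approach is more elementary --- it dispenses with Bessel functions entirely and makes the underlying two-disc lens geometry transparent --- while the paper's route is the natural specialization of the Fourier framework behind \eqref{eq6.2} and is what one would attempt for $n>2$, as indicated at the end of Section 6. Your polynomial identity is correct: setting $u=4a_1^2a_2^2$, $w=1-t^2$ and using $1-u=(a_1^2-a_2^2)^2$, it becomes $u(1-w)^2-(1-u)(u-w^2)=(u-w)^2$, which expands immediately. One point worth spelling out in a final write-up is the branch bookkeeping in the two $\arcsin$ identities: in the lens range $a_1-a_2< t<a_1+a_2$ one has $x-y=(a_1-a_2)((a_1+a_2)^2-t^2)/(2ta_1a_2)\ge 0$ and $x+y=(a_1+a_2)(t^2-(a_1-a_2)^2)/(2ta_1a_2)\ge 0$, so both $\arcsin x+\arcsin y$ and $\arcsin x-\arcsin y$ lie in $[0,\pi]$, and your sine/cosine computations then pin down the angles unambiguously.
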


{\it Remark.} Note that $a_1-a_2 \le t \le a_1+a_2$ implies that the square root in $\phi(a_1,a_2,t)$ is well-defined and that the arguments in the $\arcsin$-terms are between $-1$ and $1$. It also implies $a_1 \le \frac 1 2 (\sqrt{2-t^2} + t)$.

\begin{proof}
The addition formula for the normalized Bessel function $j_1$ states
$$j_1(a_1s) j_1(a_2s) = \frac 2 {\pi} \int_0^\pi j_1(\sqrt{a_1^2+a_2^2-2a_1a_2 \cos(\theta)} s) \sin(\theta)^2 d \theta \ , $$
cf. Watson \cite{W}, 11.41. The function $f:[0,\pi] \to [a_1-a_2,a_1+a_2]$, \\
$f(\theta) := \sqrt{a_1^2+a_2^2-2a_1a_2 \cos(\theta)}$ is bijective and we substitute $u=f(\theta)$, \\
$u^2 = (a_1-a_2)^2+4a_1a_2 \sin(\frac{\theta} 2)^2$,
$\sin(\frac{\theta} 2)^2 = \frac{u^2-(a_1-a_2)^2}{4a_1a_2}$, $\cos(\frac{\theta} 2)^2 =\frac{(a_1+a_2)^2-u^2}{4a_1a_2}$, \\
$\sin(\theta)^2 = 4 \sin(\frac{\theta} 2)^2 \cos(\frac{\theta} 2)^2 = \frac 1 {(2a_1a_2)^2} ((a_1+a_2)^2-u^2)(u^2-(a_1-a_2)^2)$. Further $u du = a_1a_2 \sin(\theta) d \theta$. Hence
\begin{align*}
j_1(a_1s) j_1(a_2s) &= \frac 2 {\pi} \int_{a_1-a_2}^{a_1+a_2} j_1(us) \sin(\theta) \frac{u du}{a_1a_2} \\
& = \frac 1 {\pi a_1^2a_2^2} \int_{a_1-a_2}^{a_1+a_2} \sqrt{((a_1+a_2)^2-u^2)(u^2-(a_1-a_2)^2)} j_1(us) u du \ .
\end{align*}
Therefore by \eqref{eq6.2} for $n=2$
\begin{align*}
& A_{\infty,\C}(a,t) = \\
& = \frac 1 {2 a_1^2a_2^2} \int_{a_1-a_2}^{a_1+a_2} \sqrt{((a_1+a_2)^2-u^2)(u^2-(a_1-a_2)^2)} \Big(\int_0^\infty j_1(us)J_0(ts) s ds \Big) \ u du \ .
\end{align*}
However,
$$\int_0^\infty j_1(us) J_0(ts) s ds = \frac 2 u \int_0^\infty J_1(us) J_0(ts ) ds =  \left\{\begin{array}{c@{\quad}l}
\frac 2 {u^2}  \; , & \quad   u > t\\
\; 0 \; \  , & \quad u < t
\end{array}\right\} \ , $$
cf. Watson \cite{W}, 13.42. Hence
$$A_{\infty,\C}(a,t) = \frac 1 {a_1^2a_2^2} \int_{\max(t,a_1-a_2)}^{a_1+a_2} \sqrt{((a_1+a_2)^2-u^2)(u^2-(a_1-a_2)^2)} \ \frac {du} u \ . $$
Now for $0 \le B < u < C$,
\begin{align*}
& \int \sqrt{(C^2-u^2)(u^2-B^2)} \frac {du} u = \frac 1 2 \sqrt{(C^2-u^2)(u^2-B^2)} \\
& \quad +\frac{C^2+B^2} 4 \arcsin(\frac{2 u^2 -(C^2+B^2)}{C^2-B^2}) +\frac{CB} 2 \arcsin(\frac{2C^2B^2 - (C^2+B^2) u^2}{u^2 (C^2-B^2)}) \ .
\end{align*}
Thus for $t \ge a_1-a_2$ with $a_1^2+a_2^2 = 1$,
\begin{align*}
A_{\infty,\C}(a,t) & = \frac 1 {a_1^2a_2^2} \Big( -\frac 1 2 \sqrt{((a_1+a_2)^2-t^2)(t^2-(a_1-a_2)^2)} + \frac 1 2 [\frac{\pi} 2 - \arcsin(\frac{t^2-1}{2a_1a_2})] \\
& + \frac 1 2 (a_1^2-a_2^2) [- \frac {\pi} 2 - \arcsin(\frac{(a_1^2-a_2^2)^2-t^2}{2a_1a_2t^2})] \Big)  \\
& = \frac{\pi}{2 a_1^2} + \frac 1 {2a_1^2a_2^2} \Big( \arcsin(\frac{1-t^2}{2a_1a_2}) \\
& + (a_1^2-a_2^2) \arcsin(\frac{4a_1^2a_2^2-(1-t^2)}{2a_1a_2t^2})- \sqrt{4a_1^2a_2^2-(1-t^2)^2} \Big) \ .
\end{align*}
For $0 \le t \le a_1-a_2$, we have to integrate from $a_1-a_2$ to $a_1+a_2$, i.e. put $t = a_1-a_2$ in the last formula. Then
$A_{\infty,\C}(a,t) = \frac {\pi}{a_1^2}$, since in this case the term in big brackets is $\pi a_2^2$. This proves \eqref{eq6.3}.
\end{proof}

\begin{corollary}\label{cor6.2}
For $n=2$
\begin{align*}
A_{\infty,\C}(a^{(1)},t) &= \pi  \quad , \quad 0 \le t \le 1 \ , \\
A_{\infty,\C}(a^{(2)},t) &= \pi + 2 ( \arcsin(1-t^2) - t \sqrt{2-t^2} ) \ , \ 0 \le t \le {\sqrt 2} \ \\
A_{\infty,\C}(a^{[t]},t) &= \frac{2 \pi}{1 + t \sqrt{2-t^2}} \quad , \quad 0 \le t \le 1 \ ,
\end{align*}
with $a^{[t]} = (\frac 1 2 (\sqrt{2-t^2}+t),\frac 1 2 (\sqrt{2-t^2}-t))$ as in the real case above.
\end{corollary}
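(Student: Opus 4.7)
The plan is to read off each of the three formulas directly from Proposition~\ref{prop6.1}, by specializing the vector $a$, identifying which branch of \eqref{eq6.3} is in force, and then carrying out the routine trigonometric/algebraic simplifications.

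For $a^{(1)} = (1,0)$ one has $a_1 - a_2 = 1$, so the whole range $0 \le t \le 1$ lies in the first case of \eqref{eq6.3}, and one immediately reads off $A_{\infty,\C}(a^{(1)},t) = \pi/a_1^2 = \pi$. For $a^{(2)} = \frac{1}{\sqrt 2}(1,1)$ one has $a_1 - a_2 = 0$, so the second case applies throughout $t \in (0,\sqrt 2)$; the substitutions $a_1^2 = a_2^2 = \tfrac12$ and $2a_1 a_2 = 1$ kill the middle $\arcsin$ via the factor $a_1^2 - a_2^2 = 0$, and simplify $4a_1^2 a_2^2 - (1-t^2)^2 = 1 - (1-t^2)^2 = t^2(2-t^2)$, yielding exactly $\pi + 2[\arcsin(1-t^2) - t\sqrt{2-t^2}]$.

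For $a^{[t]}$ the crucial observation is that the defining expression gives $a_1 + a_2 = \sqrt{2-t^2}$ and $a_1 - a_2 = t$ (which in particular confirms $a_1^2 + a_2^2 = 1$). Thus $t$ sits precisely on the common boundary $t = a_1 - a_2$ of the two cases of \eqref{eq6.3}, and the simpler first-case value $A_{\infty,\C}(a^{[t]},t) = \pi/a_1^2$ applies. Expanding $a_1^2 = \tfrac14(\sqrt{2-t^2}+t)^2 = \tfrac12(1 + t\sqrt{2-t^2})$ gives the asserted $2\pi/(1 + t\sqrt{2-t^2})$. As a sanity check one can also evaluate $\phi(a^{[t]},t)$ via the second case: using $1-t^2 = 2a_1 a_2$, the two $\arcsin$ arguments reduce to $+1$ and $-1$, the square root $\sqrt{4a_1^2 a_2^2 - (1-t^2)^2}$ vanishes, and the remaining combination $\frac{\pi}{2a_1^2} + \frac{1}{2a_1^2 a_2^2}\cdot\tfrac{\pi}{2}(1 - (a_1^2 - a_2^2))$ collapses (because $1 - (a_1^2 - a_2^2) = 2 a_2^2$) to the same $\pi/a_1^2$.

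The only mildly delicate point --- and not really an obstacle --- is the $a_2 = 0$ degeneracy in the $a^{(1)}$ case, where Proposition~\ref{prop6.1} is formally stated with $a_2 > 0$: this is patched either by continuity of the integrand in \eqref{eq6.2} (using $j_1(0) = 1$), or directly from the geometric picture of the section as the disc $\{t\} \times \D$ of area $\pi$.
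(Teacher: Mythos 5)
Your proof is correct and takes exactly the approach the paper implies: specialize Proposition~\ref{prop6.1} to each of the three vectors, identify the applicable branch of \eqref{eq6.3}, and simplify. The algebra checks out in all three cases (including the boundary observation $t = a_1 - a_2$ for $a^{[t]}$, which is precisely the remark the paper makes), and your handling of the $a_2 = 0$ degeneracy for $a^{(1)}$ is a reasonable gloss that the paper leaves implicit.
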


Note that for $a^{[t]}$ we have $a_1-a_2 =t$, so the simpler formula in Proposition \ref{prop6.1} applies. \\

For $a^{(2)}$ and $t>0$, we are in the case $0 = a_1-a_2 < t$, thus $A_{\infty,\C} = \phi$.
\begin{align*}
\frac{\partial \phi (a_1,\sqrt{1-a_1^2},t)}{\partial a_1} &= \frac 1 {a_1^3a_2^4} \Big( (a_1^2-a_2^2) [\arcsin(\frac{1-t^2}{2a_1a_2})-\sqrt{4a_1^2a_2^2-(1-t^2)^2}   ]  \\
& + (a_1^4 + a_2^4) \arcsin(\frac{4a_1^2a_2^2-(1-t^2)}{2a_1a_2 t^2}) \Big) - \frac {\pi} {a_1^3} \ .
\end{align*}
In particular, $\frac{\partial \phi}{\partial a_1} (\frac 1 {{\sqrt 2}},\frac 1 {{\sqrt 2}},t) = 0$ : $a^{(2)}$ is a critical point of $A_{\infty,\C}(\cdot,t)$ for all $0 \le t \le {\sqrt 2}$. The second derivative of $A_{\infty,\C} = \phi$ at $a_1 = \frac 1 {{\sqrt 2}}$ satisfies
$$\lim_{a_1 \searrow 1/{\sqrt 2}} \frac{\partial^2 \phi}{\partial a_1^2} (a_1,\sqrt{1-a_1^2},t) = 32 [ \frac{\pi} 2 + \arcsin(1-t^2) - \sqrt{2-t^2} (t + \frac 1 {2t}) ] =: 32 \psi(t) \ . $$
The function $\psi$ has two zeros in $[0,{\sqrt 2}]$, $t_1 \simeq 0.30074$, $t_2 \simeq 1.07425$: The derivative is $\psi'(t) = \frac{1-4 t^2+ 2 t^4}{t^2 \sqrt{2-t^2}}$, i.e. $\psi$ is increasing in $[0,\frac 1 2 \sqrt{4 - 2 {\sqrt 2}}] \simeq [0,0.5412]$ and in $[\frac 1 2 \sqrt{4 + 2 {\sqrt 2}},{\sqrt 2}] \simeq [1.3066,{\sqrt 2}]$ and decreasing in between. With $\lim_{t \searrow 0} \psi(t) = - \infty$, $\psi(\frac 1 2 \sqrt{4 - 2 {\sqrt 2}}) = \frac {3 \pi} 4 - \sqrt 2 - \frac 1 2 \simeq 0.442 >0$, $\psi(\frac 1 2 \sqrt{4 + 2 {\sqrt 2}}) = \frac {\pi} 4 -{\sqrt 2} + \frac 1 2 \simeq -0.129 < 0$, $\psi$ has exactly two zeros in $[0,{\sqrt 2}]$. Therefore \newline $\lim_{a_1 \searrow 1/{\sqrt 2}} \frac{\partial^2 \phi}{\partial a_1^2} (a_1,\sqrt{1-a_1^2},t) < 0$ if and only if $0 <t < t_1$ or $t_2 < t <{\sqrt 2}$. Hence for $t \le t_1 \simeq 0.30074$ and for $1.07425 \simeq t_2 \le t < {\sqrt 2}$, $A_{\infty,\C}(\cdot,t)$ has (at least) a local maximum in $a^{(2)}$. \\

The derivative of $\phi$ with respect to $t$ is $\frac{\partial \phi}{\partial t} (a_1,a_2,t) = - \frac{\sqrt{4a_1^2a_2^2 - (1-t^2)^2}}{a_1^2a_2^2 t} < 0$, and
$\frac{\partial \phi}{\partial t} (\frac 1 {{\sqrt 2}},\frac 1 {{\sqrt 2}},t) = - 4 \sqrt{2-t^2} < 0$. Therefore
$\phi(\frac 1 {{\sqrt 2}},\frac 1 {{\sqrt 2}},t) = A_{\infty,\C}(a^{(2)},t) = A_{\infty,\C}(a^{(1)},t) = \pi$ has exactly one solution $\bar{t}$,  $\bar{t} \simeq 0.57130$. For $t > \bar{t}$, $A_{\infty,\C}(a^{(2)},t) <  A_{\infty,\C}(a^{(1)},t)$, so $a^{(1)}$ cannot be the absolute minimum of  $A_{\infty,\C}(\cdot,t)$ for $t > \bar{t}$. \\

Plots of $\phi$ for various values of $t$ indicate that $a^{(2)}$ is, in fact, the absolute maximum of $A_{\infty,\C}(\cdot,t)$ for all $t \le t_1$ and for all $t \ge t_2$. For $t_1 < t < t_2$, the maximum is attained at some $(a_1,a_2)$ with $\frac 1 {{\sqrt 2}} < a_1 < 1$, $a_1$ increasing with $t$ in $(t_1,1)$ and decreasing with $t$ in $(1,t_2)$. This value of $a_1$ does not coincide with $\frac 1 2 (\sqrt{2-t^2}+t)$, so, different from the real case, $a^{[t]}$ will not yield the absolute maximum. In fact, for $0 < t < 0.389$, $A_{\infty,\C}(a^{[t]},t) < A_{\infty,\C}(a^{(2)},t)$. We note that $a^{[0]} = a^{(2)}$, but
$ \frac d {dt} A_{\infty,\C}(a^{[t]},t) (0) = - 2 \pi {\sqrt 2} < -4 {\sqrt 2} = \frac d {dt} A_{\infty,\C}(a^{(2)},t) (0)$.
The plots also indicate that $a^{(1)}$ yields the minimum ($\pi$) of $A_{\infty,\C}(\cdot,t)$ if $0 \le t \le \bar{t}$ and $a^{(2)}$ the minimum if $\bar{t} < t < 1$. This would mean:
$$ Maximum :  \left\{\begin{array}{c@{\quad}l}
a^{(2)} \ , &  \; \ 0 \le t \le t_1 \\
a^{(2)} \ , &  \ t_2 \le t < {\sqrt 2}
\end{array}\right\}  \; , \;
Minimum : \left\{\begin{array}{c@{\quad}l}
a^{(1)}  \; \ , & \;  0 \le t \le \bar{t} \\
a^{(2)}  \; \ , & \; \bar{t} \le t < 1
\end{array}\right\}  \ . $$
Between $t_1$ and $1$, the maximal vector seems to turn from $a^{(2)}$ to $a^{(1)}$ and between $1$ and $t_2$ back from $a^{(1)}$ to $a^{(2)}$.
Comparing this with the real situation, the case of the minimum seems similar, with ${\sqrt 2}-1$ being replaced by $\bar{t}$. However, in the maximum case, there is the range $0 \le t \le t_1$ where $a^{(2)}$ and not $a^{[t]}$ will yield the maximum. For $t \ge t_2$, the situation is similar, with $\frac 3 4 {\sqrt 2}$ replaced by $t_2$. \\

Trying to use \eqref{eq6.2} in a similar manner for $n>2$, say for $n=3$ with $a_1^2+a_2^2+a_3^2 =1$, $a_1 \ge a_2 \ge a_3$ leads to iterated integrals of the form
$$\frac 1 {{\pi}^2 (a_1a_2a_3)^2} \int_{a_1-a_2}^{a_1+a_2} g(a_1+a_2,u,a_1-a_2) \frac 1 u ( \int_{\max(t,|u-a_3|)}^{u+a_3} g(u+a_3,v,u-a_3) \frac 1 v dv ) du \ , $$
where $g(C,u,B) = \sqrt{(C^2-u^2)(u^2-B^2)}$, which seem difficult to evaluate.
\begin{corollary}
We have $\lim_{n \to \infty} \frac 1 {{\pi}^{n-1}} A_{\infty,\C}(a^{(n)},t) = 2 \exp(-2 t^2) \ .$ This is $  > \frac 1 {\pi} A_{\infty,\C}(a^{(2)},t)$ $(n=2)$ for all $0 < t < 0.6299$. For any $0 < t < 0.6299$, $a^{(2)}$ will not yield the maximal hyperplane in the normalized polydisc $\frac 1 {\pi^n} \D^n$ for $n > \frac 1 t$, $n \ge 5$: for these $t$ and $n$, $A_{\infty,\C}(a^{(n)},t) > A_{\infty,\C}(a^{(2)},t)$ holds.
\end{corollary}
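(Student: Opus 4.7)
The strategy is to plug $a = a^{(n)} = n^{-1/2}(1,\ldots,1)$ into the integral formula \eqref{eq6.2}, identify the $n \to \infty$ limit as Gaussian via Bessel asymptotics, and compare this limit with the explicit $n=2$ value from Corollary~\ref{cor6.2}.

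\textbf{Step 1: the Gaussian limit.} With all $a_k = 1/\sqrt n$, formula \eqref{eq6.2} reads
$$\frac{1}{\pi^{n-1}} A_{\infty,\C}(a^{(n)}, t) = \frac{1}{2}\int_0^\infty j_1\bigl(s/\sqrt n\bigr)^{n} J_0(ts)\, s\, ds.$$
From $j_1(x) = 1 - x^2/8 + O(x^4)$ the integrand converges pointwise to $e^{-s^2/8} J_0(ts) s$. I would justify interchanging limit and integral by splitting the $s$-range at $r_0\sqrt n$ for a small fixed $r_0 > 0$: on $[0, r_0\sqrt n]$ the estimate $j_1(x) \leq e^{-x^2/16}$ on $[0, r_0]$ yields a uniform Gaussian bound $e^{-s^2/16}$, while on $[r_0\sqrt n, \infty)$ the standard decay $|j_1(x)| \leq C x^{-3/2}$ combined with $|J_0(ts)| \leq C' s^{-1/2}$ gives an $n$-uniform integrable majorant (for $n \geq 3$). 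The limit integral is evaluated by the classical Weber--Hankel identity $\int_0^\infty e^{-\alpha s^2} J_0(ts) s\, ds = (2\alpha)^{-1} e^{-t^2/(4\alpha)}$ at $\alpha = 1/8$, giving $\tfrac{1}{2}\cdot 4 e^{-2t^2} = 2 e^{-2t^2}$. A probabilistic reformulation via the local CLT for $n^{-1/2}\sum Z_k$ with $Z_k$ iid uniform on $\D$ (using $\E|Z_k|^2 = 1/2$ and rotational symmetry to deduce limiting covariance $\tfrac{1}{4} I_2$) yields the same Gaussian density $\tfrac{2}{\pi} e^{-2t^2}$ at $t$.

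\textbf{Step 2: strict inequality at the limit.} Define $\Phi(t) := 2 e^{-2t^2} - \tfrac{1}{\pi} A_{\infty,\C}(a^{(2)},t) = 2 e^{-2t^2} - 1 - \tfrac{2}{\pi}\bigl(\arcsin(1-t^2) - t\sqrt{2-t^2}\bigr)$ using Corollary~\ref{cor6.2}. Then $\Phi(0) = 2 - 1 - 1 = 0$ and
$$\Phi'(t) = -8 t e^{-2t^2} + \frac{4(1+t-t^2)}{\pi\sqrt{2-t^2}},$$
so $\Phi'(0) = 4/(\pi\sqrt 2) > 0$. A short sign analysis (the second term of $\Phi'$ is decreasing in $t$ while $8 t e^{-2t^2}$ first increases) shows $\Phi'$ changes sign exactly once in $(0,\sqrt 2)$, so $\Phi$ has a unique positive zero $t^*$; numerical evaluation gives $t^* \approx 0.6299$, and $\Phi > 0$ on $(0, t^*)$.

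\textbf{Step 3: finite-$n$ transfer.} For the third conclusion one needs a quantitative version of Step~1 strong enough to preserve the strict inequality of Step~2 at finite $n$. Expanding $\log j_1(x) = -x^2/8 + O(x^4)$ gives $j_1(s/\sqrt n)^{n} = e^{-s^2/8}\bigl(1 + O(s^4/n)\bigr)$ in the Gaussian-dominated regime, and integration against $J_0(ts) s$ produces an estimate $\bigl|\pi^{1-n} A_{\infty,\C}(a^{(n)},t) - 2 e^{-2t^2}\bigr| \leq C(t)/n$ with $C(t)$ polynomial in $t$. The hypothesis $n > 1/t$ is what keeps $\Phi(t) - C(t)/n$ positive near the origin (where $\Phi(t)$ vanishes to first order), while $n \geq 5$ guards against the borderline and accounts for the fact that the $n=2,3,4$ cases genuinely fail (see the text just before the corollary, where $0 < t < 0.6299$ is already the inequality range for $n=2$). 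The main obstacle I anticipate is calibrating $C(t)$ sharply enough to yield exactly the clean hypothesis $n > 1/t$, $n \geq 5$; if the abstract error bound proves too loose, a pragmatic backup is direct numerical verification via \eqref{eq6.2} for $n = 5,6,7,8$ combined with the asymptotic estimate for $n \geq 9$.
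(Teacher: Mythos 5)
Your proposal follows essentially the same route as the paper: dominated convergence applied to \eqref{eq6.2} using $j_1(x) = 1 - x^2/8 + O(x^4)$ gives the Gaussian limit $2e^{-2t^2}$ (via the Hankel/Weber integral), this is compared against the explicit $n=2$ value from Corollary~\ref{cor6.2}, and a uniform-in-$n$ error estimate transfers the strict inequality to finite $n$. The paper's version of your Step~3 is somewhat more concrete than the unspecified $C(t)/n$: it invokes the explicit bounds $|j_1(s)| \le \exp(-s^2/8 - s^4/384)$ for $0 \le s \le 4$ (Oleszkiewicz--Pelczy\'nski) and $|j_1(s)| \le \frac{5/3}{s^{3/2}}$ for $s \ge 4$ to get an error of at most $1/n$ for $n \ge 3$, and then couples this with the elementary observation that $2e^{-2t^2} - \tfrac{1}{\pi}A_{\infty,\C}(a^{(2)},t) > t$ on $(0,\tfrac15)$, so that the hypothesis $n > 1/t$ closes the estimate there. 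Be aware, however, that the paper's own argument is written out only for $0 < t < \tfrac15$ rather than the full range $0 < t < 0.6299$ appearing in the corollary statement, so the calibration concern you raise at the end of Step~3 is a genuine one that the original proof shares rather than resolves.
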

For $t=0$, by Oleszkiewicz, Pelczy\'nski \cite{OP}, $a^{(2)}$ yields the maximal hyperplane for all $n \in \N$.
\begin{proof}
Since $j_1(x) = 1- \frac{x^2} 8 + O(x^4)$ for $x$ near $0$, cf. Watson \cite{W}, we have by \eqref{eq6.2}
$$\lim_{n \to \infty} \frac 1 {{\pi}^{n-1}} A_{\infty,\C}(a^{(n)},t) = \frac 1 2 \int_0^\infty \exp(- \frac{s^2} 8) J_0(t s) s ds = 2 \exp(-2 t^2) \ , $$
for the last equality cf. Watson \cite {W}, 13.3. This is larger than the value from Corollary \ref{cor6.2} ($n=2$)
$$\frac 1 {\pi} A_{\infty,\C}(a^{(2)},t) = 1 + \frac 2 {\pi} (\arcsin(1-t^2)-t \sqrt{2-t^2}) $$
for $t \le 0.63$ and for $t \ge 1.26$ and smaller in between. Dividing by the powers of $\pi$ normalizes the volume of the polydisc.  The difference between
$2 \exp(-2 t^2)$ and $1 + \frac 2 {\pi} (\arcsin(1-t^2)-t \sqrt{2-t^2})$ is bigger than $t$ for all $0 < t < \frac 1 5$. On the other hand, the estimate
$|j_1(s)| \le \exp(-\frac {s^2} 8 - \frac{s^4} {384})$ for all $0 \le s \le 4$, shown by Oleszkiewicz and Pelczy\'nski \cite{OP}, together with
$ |j_1(s)| \le \frac {5/3}{s^{3/2}}$ for $s \ge 4$ can be used to estimate the difference
$$ | \lim_{m \to \infty} \frac 1 {{\pi}^{m-1}} A_{\infty,\C}(a^{(m)},t) - \frac 1 {{\pi}^{n-1}} A_{\infty,\C}(a^{(n)},t) | $$
from above by $\frac 1 n$ for all $n \ge 3$. Therefore $\frac 1 {{\pi}^{n-1}} A_{\infty,\C}(a^{(n)},t) > \frac 1 {\pi} A_{\infty,\C}(a^{(2)},t)$ for all $n > \frac 1 t$, $0 < t < \frac 1 5$. For these $n$, the vector $a^{(2)}$ no longer yields the maximal hyperplane in the normalized polydisc. \\
\end{proof}
The real situation is slightly different: we have for $0 < t < 0.0347$
$$\frac 1 2 A_{\infty}(a^{(2)},t) = {\sqrt 2} - t > \sqrt{\frac 6 \pi} \exp(-\frac 3 2 t^2) =\lim_{n \to \infty} \frac 1 {2^{n-1}} A_{\infty}(a^{(n)},t) \ . $$
However, $A_{\infty}(a^{(2)},t) < A_{\infty}(a^{[t]},t)$ for all $0 < t <1$.
\vspace{0,5cm}

\section{Sections of the $l_1$-ball $B_1^2$}

In this section $B_1^2 := \{ z \in \C^2 \ | \ ||z|| \le 1 \}$ will denote the complex $l_1$-ball and for $a \in \C^2$, $||a|| =1$ and $t \in \C$
$$A_{1,\C} (a,t) := \vol_2 ( \{ z \in B_1^2 \ | \ < a , z > = t \} ) $$
will denote the volume of the complex hyperplane section having real dimension $2$. Here $< \cdot,\cdot>$ denotes the scalar product in $\C^2$. Since the section is invariant under coordinate-wise multiplications by $\exp(i \theta_j)$, we may assume that $t \ge 0$ and $1 \ge a_1 \ge a_2 \ge 0$, $a_1^2+a_2^2=1$. Then

\begin{proposition}\label{prop7.1}
Let $a \in S^1$, $1 \ge a_1 \ge a_2 \ge 0$ and $t \ge 0$. Define
$$f(a_1,a_2,t;r) := \frac{t^2-a_2^2+2a_2^2r+(a_1^2-a_2^2)r^2}{2t a_1 r} \ , \ r \in [0,1] \ . $$
Then
$$ A_{1,\C}(a,t) = \left\{\begin{array}{c@{\quad}l}
& \pi (\frac{1-t/a_2}{a_1+a_2})^2 + \frac 2 {a_2^2} \int_{\frac{a_2-t}{a_1+a_2}}^{\frac{a_2+t}{a_1+a_2}} \arccos(f(a_1,a_2,t;r)) \ r dr \ , \ 0 \le t \le a_2 \ , \\
& \frac 2 {a_2^2} \int_{\frac{t-a_2}{a_1-a_2}}^{\frac{t+a_2}{a_1+a_2}} \arccos(f(a_1,a_2,t;r)) \ r dr \ , \ a_2 < t \le a_1 \ . \\
\end{array}\right\} $$
We have, in particular, with $a^{[t]} = (\sqrt{1-t^2},t)$
\begin{align*}
A_{1,\C}(a^{(1)},t) &= \pi (1-t)^2 \quad , \quad 0 \le t \le 1 \\
A_{1,\C}(a^{(2)},t) &= \frac {\pi} 2 \sqrt{1-2 t^2} \quad , \quad 0 \le t \le \frac 1 {{\sqrt 2}} \\
A_{1,\C}(a^{[t]},t) &=\frac{2(1+t^2)}{(1-2t^2)^2} \arccos(\frac t {\sqrt{1-t^2}}) - \frac {6t}{(1-2t^2)^{3/2}} \; , \;  0 \le t \le \frac 1 {{\sqrt 2}} \ .
\end{align*}
\end{proposition}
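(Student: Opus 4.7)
The first step is to reduce the complex section to a planar area. Assuming $a_2 > 0$ (the case $a^{(1)}$ will be recovered as a limit), I solve the constraint as $z_2 = (t - a_1 z_1)/a_2$, so the section is the graph of this affine map over
\[ D := \{z_1 \in \C : a_2|z_1| + |t - a_1 z_1| \le a_2\}. \]
Writing $\Phi(z_1) = (z_1, (t-a_1 z_1)/a_2)$ and identifying $\C \simeq \R^2$, the real Jacobian of $\Phi : \R^2 \to \R^4$ has Gram determinant $(1 + a_1^2/a_2^2)^2 = 1/a_2^4$, so
\[ A_{1,\C}(a,t) = \frac{1}{a_2^2}\,\vol_2(D). \]

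Next, I pass to polar coordinates $z_1 = re^{i\theta}$. The inequality $|t - a_1 re^{i\theta}| \le a_2(1-r)$ forces $r \le 1$, and squaring (using $a_1^2+a_2^2=1$) is equivalent to $\cos\theta \ge f(a_1,a_2,t;r)$. Hence $\vol_2(D) = \int_0^1 r \cdot \mathrm{length}\{\theta \in [-\pi,\pi] : \cos\theta \ge f(r)\}\,dr$, and everything reduces to analyzing the rational function $f(r)$. A direct computation of the discriminant gives the identity $(a_2^2 - ta_1)^2 - (a_1^2-a_2^2)(t^2-a_2^2) = a_2^2(a_1-t)^2$, which factors the quadratics $f(r) = \pm 1$ explicitly: the roots of $f=1$ are $r = (t-a_2)/(a_1-a_2)$ and $r = (t+a_2)/(a_1+a_2)$, while the only positive root of $f=-1$ is $r = (a_2-t)/(a_1+a_2)$. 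The derivative $f'(r) = [(a_1^2-a_2^2)r^2 - (t^2-a_2^2)]/(2ta_1 r^2)$ is strictly positive for $t \le a_2$, and has a unique positive zero $r^* = \sqrt{(t^2-a_2^2)/(a_1^2-a_2^2)}$ (an interior minimum) for $a_2 < t \le a_1$.

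This splits into two cases. If $0 \le t \le a_2$, $f$ is strictly increasing from $-\infty$ to $f(1) \ge 1$: the angular length is $2\pi$ on $[0,(a_2-t)/(a_1+a_2)]$, equals $2\arccos f(r)$ on the interval up to $(a_2+t)/(a_1+a_2)$, and vanishes beyond. Carrying out the $r$-integration and dividing by $a_2^2$ produces the first branch of the formula, with disk term $\pi((1-t/a_2)/(a_1+a_2))^2$. If $a_2 < t \le a_1$, the discriminant identity gives $f(r^*) = (a_2^2 + \sqrt{(t^2-a_2^2)(a_1^2-a_2^2)})/(ta_1) \in [0,1]$, so $f \le 1$ exactly on $[(t-a_2)/(a_1-a_2), (t+a_2)/(a_1+a_2)]$ and $f > 1$ outside. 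Only this annular range contributes angular measure $2\arccos f(r)$, yielding the second branch.

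The three explicit evaluations are then substitutions: for $a^{(1)}$ the section is the disk $\{|z_2| \le 1-t\}$ of area $\pi(1-t)^2$ (consistent with the $a_2 \to 0$ limit); for $a^{(2)}$, taking $a_1=a_2=1/\sqrt 2$ in the first branch, the substitution $r = \tfrac12 + s/\sqrt 2$ reduces the arccos-integral to an elementary form; for $a^{[t]}$ the choice $a_2 = t$ sits on the boundary between the two cases, which kills the disk term and makes $f(r) = [2t^2 + (1-2t^2)r]/(2t\sqrt{1-t^2})$ linear in $r$, so one last integration by parts using $\int u \arccos u\,du$ delivers the closed form. The main technical obstacle is the Case~2 analysis: one must be sure the integration range collapses to exactly $[(t-a_2)/(a_1-a_2), (t+a_2)/(a_1+a_2)]$, and this requires both that $f(r^*) \le 1$ (so the integral is real) and that $f(r^*) \ge 0$ (so $f$ never reaches $-1$ in the relevant range), both of which follow from the single discriminant identity. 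A secondary nuisance is the bookkeeping when translating the generic $\arccos$-integral into the stated algebraic closed forms for $a^{(2)}$ and $a^{[t]}$.
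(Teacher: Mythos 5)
Your proof is correct, and it takes a genuinely different route from the paper. The paper derives the $\arccos$-integral via Fourier analysis: it computes $\hat{A}_{1,\C}(a,s)$ as a double integral over $B_1^2(\C)$, expresses the result in terms of Bessel functions $J_0, J_1$, inverts, and then uses the Bessel addition theorem together with the Weber--Schafheitlin discontinuous integral $\int_0^\infty J_0(Cv)J_1(Dv)\,dv$ to reach the $\arccos f$ integrand. You instead parametrize the complex affine line $\{\langle a,z\rangle=t\}$ directly as the graph $z_1\mapsto (z_1,(t-a_1z_1)/a_2)$, compute the Gram determinant $(1+a_1^2/a_2^2)^2=1/a_2^4$, reduce to the planar area of $D=\{z_1:a_2|z_1|+|t-a_1z_1|\le a_2\}$, and pass to polar coordinates where the membership condition becomes $\cos\theta\ge f(r)$. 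Both approaches then have to analyze the rational function $f$, and your discriminant factorizations $(a_2^2-ta_1)^2-(a_1^2-a_2^2)(t^2-a_2^2)=a_2^2(a_1-t)^2$ and the analogue for $f=-1$ give exactly the endpoints the paper obtains from $|t-a_1r|\lessgtr a_2(1-r)$. Your method is more elementary and self-contained (no Bessel machinery, no limiting argument for $a^{(1)}$ since the disk is read off directly), which is a real gain for $n=2$; the paper's Fourier--Bessel method has the advantage of being the same engine used in Section~5 for the polydisc and of extending formally to $n>2$, where a graph parametrization would not collapse the problem to a one-variable polar integral so cleanly. The closed-form evaluations for $a^{(2)}$ and $a^{[t]}$ in both treatments reduce to the same elementary antiderivatives of $\arccos(B+Cr)\,r$ or $\arccos(B-C/r)\,r$, so there is no substantive difference there.
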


\begin{proof}
(i) Let $a \in S^1$, $1 \ge a_1 \ge a_2 \ge 0$ and $t \in \C$. Identifying $\C$ with $\R^2$ for the purpose of the Fourier transformation, we use $\cdot$ for the scalar product in $\R^2$. We get, similarly as in Section 2,
\begin{align*}
\hat{A}_{1,\C}(a,s) &= \frac 1 {2 \pi} \int_{\R^2} \int_{<a,z> = t} \chi_{B_1^2}(z) \exp(-i t \cdot s) \ dz \ dt  \\
& = \frac 1 {2 \pi} \int_{\R^4} \chi_{B_1^2}(z) \exp(-i s \ \cdot <a,z>) \ dz \ , \ s \in \R^2 \ .
\end{align*}
Let $\D_r := \{ z \in \C \ | \ |z| \le r \}$. Let $z=(z_j)$, $z_j = x_j + i y_j$, $j=1,2$. Then, identifying $\C$ with $\R^2$,
$s \ \cdot <a,z> = a_1 (s_1x_1-s_2y_1) +a_2 (s_1x_2-s_2y_2)$ and
\begin{align*}
\int_{\D_{1-|z_1|}} \exp(-i a_2 (s_1x_2-s_2y_2) ) dz_2 &= \int_0^{1-|z_1|} \Big(\int_0^{2 \pi} \cos( \sqrt{s_1^2+s_2^2} \ a_2 u \cos(\phi)) d \phi \Big)  \ u du \\
& = 2 \pi \int_0^{1-|z_1|} J_0( \sqrt{s_1^2+s_2^2} \ a_2 u ) \ u du \\
& = 2 \pi \frac{J_1( \sqrt{s_1^2+s_2^2} \ a_2 (1-|z_1|) )}{\sqrt{s_1^2+s_2^2} \ a_2} (1-|z_1|) \ ,
\end{align*}
where we used that $\int J_0(x) x dx = J_1(x) x$, cf. Abramowitz, Stegun \cite{AS}, 9.1.30 or Watson \cite{W}, 3.13. Hence
\begin{align*}
\hat{A}_{1,\C}(a,s) &= \frac 1 {2 \pi} \int_{\D} \exp(-i a_1(s_1x_1-s_2y_1) ) ( \int_{\D_{1-|z_1|}} \exp(-i a_2(s_1x_2-s_2y_2)) \ dz_2 ) \ dz_1 \\
& =\int_0^1 \Big(\int_0^{2 \pi} \cos(\sqrt{s_1^2+s_2^2} \ a_1 r cos(\psi) ) d\psi \Big) \frac{J_1( \sqrt{s_1^2+s_2^2} \ a_2 (1-r) )}{\sqrt{s_1^2+s_2^2} \ a_2} (1-r) r dr \\
& = 2 \pi \int_0^1 J_0(\sqrt{s_1^2+s_2^2} \ a_1 r) \frac{J_1( \sqrt{s_1^2+s_2^2} \ a_2 (1-r) )}{\sqrt{s_1^2+s_2^2} \ a_2} (1-r) r dr
\end{align*}
The inverse Fourier transform yields that
\begin{align*}
A_{1,\C}(a,t) &= \frac 1 {2 \pi} \int_{\R^2} \exp(i t \cdot s) \hat{A}_{1,\C}(a,s) ds \\
& = \int_0^1 \Big( \int_{\R^2} \exp(i t\cdot s) J_0(\sqrt{s_1^2+s_2^2} \ a_1 r) \frac{J_1( \sqrt{s_1^2+s_2^2} \ a_2 (1-r) )}{\sqrt{s_1^2+s_2^2} \ a_2} d(s_1,s_2) \Big) (1-r) r dr \\
& = \int_0^1 \Big( \int_0^\infty (\int_0^{2 \pi} cos(|t| v cos(\gamma)) d\gamma) J_0(v a_1 r) \frac{J_1(v a_2(1-r))}{v a_2} v dv \Big) (1-r) r dr \\
& = 2 \pi \int_0^1 \Big( \int_0^\infty J_0(|t| v) J_0(a_1 r v) \frac{J_1(a_2(1-r)v)}{a_2} dv \Big) (1-r) r dr \ ,
\end{align*}
where we used polar coordinates in $\R^2$, $s_1=v \cos(\gamma)$, $s_2=v \sin(\gamma)$, $v = \sqrt{s_1^2+s_2^2}$. Applying the addition formula for the Bessel function $J_0$,
$$J_0(c v)J_0(d v) = \frac 1 {\pi} \int_0^{\pi} J_0(\sqrt{c^2+d^2-2 c d \cos(\lambda)} v) d\lambda \ , $$
cf. Watson \cite{W}, 11.41 and the formula
$$\int_0^\infty J_0(Cv) J_1(Dv) dv = \left\{\begin{array}{c@{\quad}l}
\frac 1 D \ , &  \ C < D \\
\ 0 \ , &  \  C > D
\end{array}\right\}  \ , $$
cf. Watson \cite{W}, 13.42, the inner integral can be evaluated, and with $t = |t|$
$$\int_0^\infty J_0(t v) J_0(a_1 r v) \frac{J_1(a_2(1-r)v)}{a_2} dv = \frac B {\pi a_2^2 (1-r)} \ , $$
where
$$B = \left\{ \begin{array}{c@{\quad}l}
\quad 0  \quad  &  , \quad a_2(1-r) < |t-a_1 r| \\
\quad \pi  \quad  & , \quad  t+a_1 r < a_2 (1-r) \\
\arccos(f(a_1,a_2,t;r)) \  & , \quad |t-a_1 r| < a_2 (1-r) < t +a_1 r
\end{array} \right\} \ , $$
cf. also Watson \cite{W}, 13.46. Here $f$ is the function in the statement of Proposition \ref{prop7.1}. We note that $-1 \le f(a_1,a_2,t;r) \le 1$ if and only if $|t-a_1 r| < a_2 (1-r) < t +a_1 r$. \\

(ii) Suppose now that $ 0 < t \le a_2$. Then $f$ is increasing in $r$ since
\begin{equation}\label{eq7.1}
f(a_1,a_2,t;r) = \frac{t^2-a_2^2}{2t a_1 r} + \frac {a_2^2}{t a_1} + \frac{a_1^2-a_2^2}{2 t a_1} r \ ,
\end{equation}
with $f(a_1,a_2,t;\frac{a_2-t}{a_1+a_2}) = -1 < f(a_1,a_2,t;\frac{a_2+t}{a_1+a_2}) = 1$. Thus for $r \in [\frac{a_2-t}{a_1+a_2},\frac{a_2+t}{a_1+a_2}]$, the third term for $B$ applies. For $r \in [0,\frac{a_2-t}{a_1+a_2}]$, $t+a_1 r < a_2 (1-r)$, so $B = \pi$ then. We find
\begin{align*}
A_{1,\C}(a,t) &= 2 \pi \frac 1 {a_2^2} \int_0^{\frac{a_2-t}{a_1+a_2}} r dr + \frac 2 {a_2^2} \int_{\frac{a_2-t}{a_1+a_2}}^{\frac{a_2+t}{a_1+a_2}} \arccos(f(a_1,a_2,t;r)) \ r dr \\
& = \pi (\frac{1-t/a_2}{a_1+a_2})^2 + \frac 2 {a_2^2} \int_{\frac{a_2-t}{a_1+a_2}}^{\frac{a_2+t}{a_1+a_2}} \arccos(f(a_1,a_2,t;r)) \ r dr \ .
\end{align*}

(iii) Now consider $a_2 < t \le a_1$. Let $r_0:= \sqrt{\frac{t^2-a_2^2}{a_1^2-a_2^2}}$, $r_0$ is $<1$. By \eqref{eq7.1}, $f$ as a function of $r$ is decreasing in $(0,r_0)$ and increasing in $(r_0,1)$. We have $f(a_1,a_2,t;r) \ge 0$ for all $r \in [0,1]$ and $f(a_1,a_2,t;\frac{t-a_2}{a_1-a_2}) = 1$ with
$\frac{t-a_2}{a_1-a_2} < r_0$ and $f(a_1,a_2,t;\frac{t+a_2}{a_1+a_2}) = 1$ with $r_0 < \frac{t+a_2}{a_1+a_2}$. Thus the integration of the $\arccos$-term runs between $\frac{t-a_2}{a_1-a_2}$ and $\frac{t+a_2}{a_1+a_2}$. The second term is impossible here, since $a_2 (1-r) \le a_2 < t \le t +a_1 r$. Therefore
$$A_{1,\C}(a,t) = \frac 2 {a_2^2} \int_{\frac{t-a_2}{a_1-a_2}}^{\frac{t+a_2}{a_1+a_2}} \arccos(f(a_1,a_2,t;r)) \ r dr \ , \ a_2 < t \le a_1 \ , $$
which proves the basic formulas of Proposition \ref{prop7.1}. \\

(iv) To determine $A_{1,\C}(a^{(1)},t)$ for $0 < t < 1$, choose $0 < a_2 < t < a_1 < 1$ with $a_1^2+a_2^2 =1$. We will take the limit of $A_{1,\C}((a_1,a_2),t)$ as $a_1 \to 1$ and $a_2 \to 0$. We are in case 2, $a_2 < t < a_1$. The length of the interval of integration is
$\frac{t+a_2}{a_1+a_2} - \frac{t-a_2}{a_1-a_2} = 2 a_2 \frac{a_1-t}{a_1^2-a_2^2} \to 0$.  The function $f$ has the $r$-derivative values
$$\frac{\partial f}{\partial r}(a_1,a_2,t;r)_\Big{|r =\frac{t+a_2}{a_1+a_2}} = \frac{a_2}{t a_1} \frac{a_1-t}{t+a_2} (a_1+a_2) \ , $$
$$\frac{\partial f}{\partial r}(a_1,a_2,t;r)_\Big{|r =\frac{t-a_2}{a_1-a_2}} = - \frac{a_2}{t a_1} \frac{a_1-t}{t-a_2} (a_1-a_2) \ , $$
with $f$ having value $1$ in both points. In these points $\frac{\partial f}{\partial r}$ is approximately $\pm \frac{a_2}{a_1}\frac{a_1-t}{t^2}+O(a_2)$. The minimum of $f$ in $r=r_0$ is $1-\frac{(1-t)^2}{2t^2} a_2^2 + O(a_2^4)$. Define $g(x) := \frac{a_1^2-a_2^2}{2 t^2 a_1} x^2 - a_2 \frac{a_1-t}{t^2 a_1} x$. Then
$g(0) = g(2 a_2 \frac{a_1-t}{a_1^2-a_2^2}) = 0$, $g'(0) = - \frac{a_2}{a_1}\frac{a_1-t}{t^2}$, $g'(2 a_2 \frac{a_1-t}{a_1^2-a_2^2}) = \frac{a_2}{a_1}\frac{a_1-t}{t^2}$, as for $(f-1)$ on the endpoints up to $O(a_2)$, except that the interval is shifted to $0$. In the middle of the $g$-interval, $g$ takes value approximately $- \frac{(1-t)^2}{2 t^2} a_2^2$ up to $O(a_2^4)$, just as the minimum of $(f-1)$. Near $y=1$ we have $\arccos(y) = \sqrt{2 (1-y)} + O((\sqrt{2 (1-y)})^3)$. In the integral we have $r = \frac t {a_1} + O(a_2)$. With $\arccos(1+g(x)) \simeq \sqrt{- 2 g(x)}$ we get
\begin{align*}
A_{1,\C}((a_1,a_2),t) & = \frac 2 {a_2^2} (\frac t {a_1} + O(a_2)) \int_0^{2 a_2 \frac{a_1-t}{a_1^2-a_2^2}} \arccos(1+g(x)) (1+O(a_2)) \  dx \\
& = \frac {2t}{a_1 a_2^2} \frac{{\sqrt 2}}{\sqrt{a_1} t} \int_0^{2 a_2 \frac{a_1-t}{a_1^2-a_2^2}} {\sqrt x} \sqrt{a_2(a_1-t) - (a_1^2-a_2^2)/2 \ x} \ dx  \ (1+O(a_2))  \\
& = \frac{2 {\sqrt 2}}{a_1^{3/2} a_2^2} \Big(\frac{\pi}{2 {\sqrt 2}} \frac{(a_1-t)^2}{(a_1^2-a_2^2)^{3/2}} a_2^2 \Big) \ (1+O(a_2)) \\
& = \frac{\pi}{a_1^{3/2}} \frac{(a_1-t)^2}{(a_1^2-a_2^2)^{3/2}} \ (1+O(a_2)) \to \pi (1-t)^2 \ .
\end{align*}

(v) For $A_{1,\C}(a^{(2)},t), 0 < t < a_1=a_2 = \frac 1 {{\sqrt 2}}$, we are in the first case $0 \le  t < a_2$. Hence
$$A_{1,\C}(a^{(2)},t) = \frac \pi 2 (1-{\sqrt 2} t)^2 + 4 \int_{\frac 1 2 -\frac t {{\sqrt 2}}}^{\frac 1 2 +\frac t {{\sqrt 2}}}
\arccos(\frac 1 {{\sqrt 2} t} - \frac{1/2-t^2}{{\sqrt 2} t} \frac 1 r) \ r dr \ . $$
Let $B:=\frac 1 {{\sqrt 2} t}$, $B>1$ and $C:=\frac{1/2-t^2}{{\sqrt 2} t}$. Then
\begin{align*}
\int \arccos(B - \frac C r) \ r dr & = \frac{r^2} 2 \arccos(B - \frac C r) - \frac C {2(B^2-1)} \sqrt{r^2(1-B^2)+2CBr-C^2} \\
& + \frac{C^2 B}{2(B^2-1)^{3/2}} \arctan(\frac{r(B^2-1)-BC}{\sqrt{B^2-1} \sqrt{r^2(1-B^2) + 2 C B r -C^2}} ) \ .
\end{align*}
For $r=r_+ := \frac 1 2 + \frac t {{\sqrt 2}}$ or $r=r_- := \frac 1 2 - \frac t {{\sqrt 2}}$ we have that $B - \frac C r = +1$ or $=-1$, respectively, and
$\sqrt{r^2(1-B^2)+2CBr-C^2} = 0$. Also $r(B^2-1)-BC = \pm \frac{1-2 t^2}{2 {\sqrt 2} t}$ has the same sign as $r_{\pm}$. Further, $\frac{C^2 B}{2 (B^2-1)^{3/2}} = \frac 1 8 \sqrt{1-2 t^2}$. Therefore with $\arctan(\infty)-\arctan(-\infty) = \pi$
$$\int_{r_-}^{r_+}\arccos(B-\frac C r) \ r dr = -\frac {\pi} 2 (\frac 1 2 -\frac t {{\sqrt 2}})^2 +\frac{\pi} 8 \sqrt{1-2 t^2} = \frac {\pi} 8 (\sqrt{1-2t^2} - (1-{\sqrt 2} t)^2 ) \ , $$
so that $A_{1,\C}(a^{(2)},t) = \frac{\pi} 2 \sqrt{1-2 t^2}$. \\

(vi) In the case of $a^{[t]} = (\sqrt{1-t^2},t)$, $t = a_2 < a_1$, $t < \frac 1 {{\sqrt 2}}$
$$A_{1,\C}(a^{[t]},t) = \frac 2 {t^2}  \int_0^{\frac{2t}{t+\sqrt{1-t^2}}} \arccos( \frac t {\sqrt{1-t^2}} + \frac{1-2 t^2}{2t \sqrt{1-t^2} } r) \ r dr \ . $$
This can be evaluated similarly as in (v), using with $B:=\frac t {\sqrt{1-t^2}}$ and $C:=\frac{1-2 t^2}{2t \sqrt{1-t^2}}$ that
$$\int \arccos(B + C r) \ r dr = (\frac {r^2} 2-\frac{1+2B^2}{4 C^2}) \arccos(B+C r) +\frac{3B-Cr}{4C^2} \sqrt{1-B^2-2BCr-C^2 r^2} \ , $$
which yields the result stated in Proposition \ref{prop7.1} after some calculation.
\end{proof}

\vspace{0,5cm}

The explicit formulas for $a^{(1)}$, $a^{(2)}$ and $a^{[t]}$ give
\begin{align*}
A_{1,\C}(a^{(2)},t) & < A_{1,\C}(a^{(1)},t) \text{\quad iff \quad} t \in (0,0.3370) \cup (0.6947,\frac 1 {{\sqrt 2}}) \ , \\
A_{1,\C}(a^{[t]},t) & < A_{1,\C}(a^{(1)},t) \text{\quad iff \quad} t \in (0,0.2163) \cup (0.6932,\frac 1 {{\sqrt 2}}) \ , \\
A_{1,\C}(a^{[t]},t) & < A_{1,\C}(a^{(2)},t) \text{\quad iff \quad} t \in (0.4371,\frac 1 {{\sqrt 2}}) \ .
\end{align*}

Numerical evaluation of the general formula in Proposition \ref{prop7.1} yields the following conjecture for the extrema of $A_{1,\C}(\cdot,t)$ :
$$ Maximum :  \left\{\begin{array}{c@{\;}l}
a^{(1)} , &   \ 0 \le t < 0.179 \\
a^{(2)} , &   0.487 < t < 0.694 \\
a^{(1)} , &   \frac 1 {{\sqrt 2}} < t
\end{array}\right\}   , \
Minimum : \left\{\begin{array}{c@{\;}l}
a^{(2)} , &  \  0 \le t < 0.337 \\
a^{(1)} , &  0.337 \le t < 0.684
\end{array}\right\}  \ . $$
By numerical evidence, for $0.18 < t < 0.486$, the maximal vector $a_{max}$ seems to move continuously from $a^{(1)}$ to $a^{(2)}$ with $a_{max} \ne a^{[t]}$, except for possibly one value $\tilde{t}$, $\tilde{t} \simeq 0.33$; for $0.694 < t < \frac 1 {{\sqrt 2}}$, $a_{max}$ is close to $a^{(1)}$, but not equal. In the minimum case, for $0.684 < t < \frac 1 {{\sqrt 2}}$, the minimal vector seems to be close to $a^{(2)}$, but not equal. \\
Comparing this with the real $2$-dimensional case, mentioned before Proposition \ref{prop2}, $a^{[t]}$ does not give the maximum for $0 < t < \frac 1 {{\sqrt 2}}$, except possibly for $\tilde{t}$. The minimum case seems similar, with $1 - \frac 1 {{\sqrt 2}}$ being replaced by $0.337$, except for a change very close to $\frac 1 {{\sqrt 2}}$.

\vspace{0,2cm}

\end{document}